\newtheorem{proposition}{Proposition}[section]
\newtheorem{corollary}{Corollary}[section]
\newtheorem*{rem}{Remark}
\newcommand\dd{\mathrm{d}}
\newcommand\n{\mathbf{n}}
\newcommand\e{\mathbf{e}}
\newcommand\m{\mathbf{m}}
\newcommand\p{\mathbf{p}}
\newcommand\vvec{\mathbf{v}}
\newcommand\w{\mathbf{w}}
\newcommand\x{\mathbf{x}}
\newcommand\Qvec{\mathbf{Q}}
\newcommand\E{\mathbf{E}}
\newcommand\pp{\partial}
\newcommand\tr{\mathrm{tr}}
\title{Remarks on Uniaxial Solutions in the Landau-de Gennes Theory }
\author{Apala Majumdar$^1$\thanks{Corresponding author. Email: \tt{A.Majumdar@bath.ac.uk}} ~and Yiwei Wang$^2$ \vspace{1em} \\ 
       \small{$^1$ Department of Mathematical Sciences, University of Bath, Bath BA2 7AY, UK.} \\ 
       \small{$^2$ School of Mathematical Sciences, Peking University, Beijing 100871, China.} \\}
\date{}
\begin{document}
\maketitle                            

\begin{abstract}
We study uniaxial solutions of the Euler-Lagrange equations for a Landau-de Gennes free energy for nematic liquid crystals, with a fourth order bulk potential, with and without elastic anisotropy. In the elastic isotropic case, we show that (i) all uniaxial solutions of the Euler-Lagrange equations, with a director field of certain symmetry, necessarily have the radial-hedgehog structure modulo an orthogonal transformation, (ii) the``escape into third dimension'' director cannot correspond to a purely uniaxial solution of the Landau-de Gennes Euler-Lagrange equations and we do not use artificial assumptions on the scalar order parameter and (iii) there are no non-trivial uniaxial solutions that have $\mathbf{e}_z$ as an eigenvector. In the elastic anisotropic case, we prove that all uniaxial solutions of the corresponding Euler-Lagrange equations, with a certain symmetry, are strictly of the radial-hedgehog type, i.e. the elastic anisotropic case enforces the radial-hedgehog structure (or the degree $+1$-vortex structure) more strongly than the elastic isotropic case and the associated partial differential equations are technically far more difficult than in the elastic isotropic case.
\end{abstract}



\section{Introduction}
\label{sec:intro}
Nematic liquid crystals are classical examples of mesophases intermediate in physical character between conventional solids and liquids \cite{dg, newtonmottram}. Nematics are often viewed as complex liquids with long-range orientational order or distinguished directions of preferred molecular alignment, referred to as directors in the literature. The orientational anisotropy of nematics makes them the working material of choice for a range of optical devices, notably they form the backbone of the multi-billion dollar liquid crystal display industry.

Continuum theories for nematics are well-established in the literature and we work within the powerful Landau-de Gennes (LdG) theory for nematic liquid crystals. The LdG theory describes the nematic phase by a macroscopic order parameter, the $\mathbf{Q}$-tensor order parameter that describes the orientational anisotropy in terms of the preferred directions of alignment and ``scalar order parameters'' that measure the degree of order about these directions. Mathematically, the  $\Qvec$-tensor is a symmetric, traceless
$3\times3$ matrix, with five degrees of freedom \cite{dg, newtonmottram}. A nematic phase is said to be (i)~\textit{isotropic} if $\Qvec=0$, (ii)~\textit{uniaxial} if $\Qvec$ has two degenerate non-zero eigenvalues with a single distinguished eigenvector and (iii)~\textit{biaxial}
if $\Qvec$ has three distinct eigenvalues.
In particular, if $\Qvec$ is uniaxial, it can be written in the form
\begin{equation}
\label{eq:uniaxial}
\Qvec = s\left( \n \otimes \n - \frac{\mathbf{I}}{3} \right), 
\end{equation}
where $\n$ is the distinguished eigenvector with the non-degenerate eigenvalue, labelled as the ``uniaxial'' director, $s$ is a scalar order parameter that measures the degree of order about $\n$, and $\mathbf{I}$ is the $3 \times 3$ identity matrix \cite{majumdarcpam2012}. The eigenvalues of the uniaxial $\Qvec$ are $\frac{2s}{3}, -\frac{s}{3}, -\frac{s}{3}$ respectively and $s=0$ describes a locally isotropic point. The uniaxial $\Qvec$-tensor only has three degrees of freedom and the mathematical analysis of uniaxial $\Qvec$-tensors has strong analogies with Ginzburg-Landau theory, since we can treat uniaxial $\Qvec$-tensors as $\mathbb{R}^3 \to \mathbb{R}^3$ maps \cite{majumdarcpam2012}.

As with most variational theories in materials science, the experimentally observed equilibria are modelled by either global or local minimizers of a LdG energy functional \cite{dg, newtonmottram, amaz}. The LdG energy typically comprises an elastic energy and a bulk potential; the elastic energy penalizes spatial inhomogeneities and the bulk potential dictates the isotropic-nematic phase transition as a function of the temperature \cite{newtonmottram, amaz}. There are several forms of the elastic energy; the Dirichlet energy is referred to as the ``isotropic'' or ``one-constant'' elastic energy and elastic energies with multiple elastic constants are labelled as ``anisotropic'' in the sense that they have different energetic penalties for different characteristic deformations \cite{gartlanddavis}. These equilibria are classical solutions of the associated Euler-Lagrange equations, which are a system of five elliptic, non-linear partial differential equations for reasonable choices of the elastic constants \cite{gartlanddavis}.  This paper focuses on necessary and sufficient conditions for the existence/non-existence of purely uniaxial solutions for the LdG Euler-Lagrange equations, with and without elastic anisotropy. This is a highly non-trivial analytic question; uniaxial $\mathbf{Q}$-tensors only have three degrees of freedom and to date, there are few explicit examples of uniaxial solutions for this highly coupled system.

Our computations build on the results  in \cite{majumdarcpam2012} and \cite{lamy2015uniaxial}, although both papers focus on the elastic isotropic case. In the paper \cite{majumdarcpam2012}, the author derives the governing partial differential equations for the order parameter $s$ and three-dimensional director field, $\mathbf{n}$ in (\ref{eq:uniaxial}) in the one-constant LdG case and studies uniaxial minimizers (if they exist) of the corresponding energy functional in a certain asymptotic limit. In \cite{lamy2015uniaxial}, the author addresses some general questions about the existence of uniaxial solutions for the one-constant LdG Euler-Lagrange equations. The author derives an ``extra equation'' that needs to be satisfied by the director in ``non-isotropic'' regions; this equation heavily constrains uniaxial equilibria. The author further shows that if the uniaxial solution is invariant in a given direction, then the uniaxial director is necessarily constant in every connected component of the domain; we refer to such uniaxial solutions as ``trivial'' uniaxial solutions. In \cite{lamy2015uniaxial}, the author
proves that for the model problem of a spherical droplet with radial boundary conditions, the ``radial-hedgehog'' solution is the unique uniaxial equilibrium for all temperatures, for a one-constant elastic energy density. The radial-hedgehog solution is analogous to the degree $+1$ vortex in the Ginzburg-Landau theory for superconductivity \cite{brezisbethuelhelein}; the director field $\mathbf{n}$ is simply the radial unit-vector in three dimensions and the scalar order parameter, $s$, is a solution of a second-order nonlinear ordinary differential equation which vanishes at the origin (see (\ref{eq:uniaxial})). It is not yet clear if there are other explicit uniaxial solutions of the Euler-Lagrange equations, even in the one-constant case, in three dimensions.

We re-visit the question of purely uniaxial solutions for the LdG Euler-Lagrange equations, in the elastic isotropic and anisotropic cases, without the restriction of special geometries or specific boundary conditions.
Whilst we do not provide a definitive answer to the question - are there other non-trivial uniaxial solutions, apart from the well-known radial-hedgehog solution, for the fully three-dimensional (3D) Euler-Lagrange equations; we make progress by considering special cases and excluding the existence of other non-trivial uniaxial solutions for these special cases. Our main results can be summarized as follows. We firstly characterize the uniaxial solution in terms of the scalar order parameter, $s$, and two angular variables, $f$ and $g$, that parameterize the uniaxial director. We derive the five governing partial differential equations for these three variables from the one-constant Euler-Lagrange equations and in particular, we recast the ``extra condition'' in \cite{lamy2015uniaxial} in terms of $f$ and $g$. This is an interesting and useful computation that has not been previously reported in the literature. In terms of spherical polar coordinates, $\left(r, \varphi, \theta \right)$ where $r$ is the radial distance in three dimensions, $0\leq \varphi\leq \pi$ is the polar angle and $0\leq \theta < 2\pi$ is the azimuthal angle, the radial-hedgehog solution corresponds to $f = \varphi$ and $ g = \theta$ with $s$ being a solution of a second-order ordinary differential equation. We prove that for a separable director field with $f = f(\varphi)$ or $g = g(\theta)$, all admissible uniaxial solutions must have $f=\pm \varphi$, $g=\pm \theta + C$ for a real constant $C$ and $s$ is a solution of the ``radial-hedgehog'' ordinary differential equation i.e. all uniaxial solutions with this symmetry are of the radial-hedgehog type, modulo an orthogonal transformation. Our method of proof 
is purely based on the governing partial differential equations for $s$, $f$ and $g$. 
We also show that the ``escape in third dimension'' director field cannot correspond to a uniaxial solution, since we cannot find a $s$ compatible with this director. This has been previously reported in the literature under the assumption that $s$ is independent of $z$ \cite{lamy2015uniaxial}; our proof again does not use such assumptions and only relies on the equations. Our last result in the elastic isotropic case concerns uniaxial solutions that have $\mathbf{e}_z$, the unit-vector in the $z$-direction, as an eigenvector; we use a basis representation of $\mathbf{Q}$-tensors in terms of five scalar functions, two of which necessarily vanish when $\mathbf{e}_z$ is a fixed eigenvector. We analyse the governing equations for the remaining three scalar functions to exclude the existence of non-trivial solutions of this type. Our result is not subsumed by results in \cite{lamy2015uniaxial} where the author defines reduced problems in terms of invariance in one direction i.e.$\mathbf{v} \cdot \nabla \mathbf{Q}=0$ for some unit-vector $\mathbf{v}$ and our method of proof is different, which doesn't rely on the ``extra equation''.
Our last result focuses on an anisotropic elastic energy density in the LdG energy functional. The anisotropic term in the Euler-Lagrange equations is a non-trivial technical challenge. 
We apply the same techniques as in the elastic isotropic case, to compute the projections of these equations in three different spaces, and manipulate these projections to show that if $s=s(r)$ and if $f$ and $g$ are independent of $r$, then we must necessarily have $f = \varphi$, $g = \theta$ and $s$ is a solution of an explicit second-order nonlinear ordinary differential equation. This is exactly the anisotropic ``radial-hedgehog'' solution which has been reported in \cite{mkaddemgartland1999} but ours is the first rigorous analysis of uniaxial equilibria in the anisotropic LdG case. 

The paper is organized as follows. In Section~\ref{sec:prelim}, we introduce the basic mathematical preliminaries for the Landau-de Gennes theory. In Sections~\ref{sec:uniaxial1}, \ref{sec:extra_equation}, \ref{sec:dimension_reduction}, we focus on the elastic isotropic case and in section~\ref{sec:anisotropic}, we study an anisotropic LdG elastic energy density. In Section~\ref{sec:conclusions}, we present our conclusions and future perspectives.

\section{Preliminaries}
\label{sec:prelim}

We consider the LdG theory in the absence of any external fields and surface energies \cite{amaz, lamy2015uniaxial, henaomajumdar2012}. The LdG energy is a nonlinear, non-convex functional of the LdG $\Qvec$-tensor and its spatial derivatives; the LdG free energy is given by \cite{dg}
\begin{equation}\label{LdG}
\mathcal{F}[\Qvec] = \int_{\Omega} f_{\rm b}(\Qvec) + f_{\rm el}(\Qvec, \nabla \Qvec) \dd \x 
\end{equation}
with $f_{\rm b}$ and $f_{\rm el}$ the bulk and elastic energy densities, given by
\begin{equation}
f_{\rm b} =  \frac{\alpha(T - T^{*})}{2} \tr(\Qvec^2) - \frac{b^2}{3}\tr(\Qvec^3) + \frac{c^2}{4} (\tr(\Qvec^2))^2,
\end{equation}
\begin{equation}
f_{\rm el} = \frac{L}{2} \left ( | \nabla \Qvec|^2 + L_2 \left(\textrm{div} \Qvec \right)^2 \right) , 
\end{equation}
where 
$\alpha, b^2, c^2 > 0$ are material-dependent constants, 
$T$ is the absolute temperature, and $T^{*}$ is the supercooling temperature below which the isotropic phase $\Qvec = 0$ loses its stability.
Further, $L > 0$ is an elastic constant and $L_2$ is the ``elastic anisotropy'' parameter. In the remainder of this section, we set $L_2=0$, labelled as the ``elastic isotropic'' case and we re-visit the ``anisotropic'' $L_2 \neq 0$ case in the last section.

It is convenient to nondimensionalize (\ref{LdG}) in the following way. Define $\xi = \sqrt{\frac{27 c^2 L}{b^4}}$ as a characteristic length and rescale the variables by \cite{henaomajumdarpisante2017}
\begin{equation}
\tilde{\x} = \frac{\x}{\xi}, \quad \tilde{\Qvec} = \sqrt{\frac{27 c^4}{2 b^4}} \Qvec, \quad \tilde{\mathcal{F}} = \sqrt{\frac{27 c^6}{4b^4L^3}} \mathcal{F}.
\end{equation} 
Dropping the superscript for convenience,  the dimensionless LdG functional can be written as
\begin{equation}\label{eq_dimless}
\mathcal{F}[\Qvec] = \int_{\Omega} \frac{t}{2} \tr(\Qvec^2) - \sqrt{6} \tr(\Qvec^3) + \frac{1}{2} (\tr(\Qvec^2))^2
+ \frac{1}{2} |\nabla \Qvec|^2 \dd \mathbf{x},
\end{equation}
where $t = \dfrac{27 \alpha(T - T^{*}) c^2}{b^4}$ is the reduced temperature. 

We work with temperatures below the nematic-isotropic transition temperature, that is $t \leq 1$. It can be verified that $f_{\rm b}$ attains its minimum on the set of $\Qvec$-tensors given by \cite{majumdar2010equilibrium}
\begin{equation}
\Qvec_{min} = \left\{ \Qvec = s_{+} (\n \otimes \n - \frac{1}{3} \mathbf{I}), \quad \n \in \mathbb{S}^2 \right\}
\end{equation}
for $t \leq 1$, where
\begin{equation}
s_{+} = \sqrt{\frac{3}{2}} \cdot \frac{3  + \sqrt{9 - 8t}}{4}. 
\end{equation}

The LdG equilibria or LdG critical points are classical solutions of the associated Euler-Lagrange equations \cite{amaz}
\begin{equation}\label{E-L}
\Delta \Qvec_{ij} = t \Qvec_{ij} - 3 \sqrt{6} \left( \Qvec_{ik}\Qvec_{kj} - \frac{1}{3} \delta_{ij} \tr(\Qvec^2) \right) + 2 \Qvec_{ij} \tr(\Qvec^2),
\end{equation}
where the term $\sqrt{6} \delta_{ij} \tr(\Qvec^2)$ is a Lagrange multiplier accounting for the tracelessness constraint $\tr(\Qvec) = 0$. This is a system of five elliptic, nonlinear, coupled partial differential equations.
The question of interest is - do we have purely uniaxial solutions of the form (\ref{eq:uniaxial}) of the system (\ref{E-L})?

\section{Elastic Isotropic Case}

\subsection{Uniaxial Solutions with Specified Symmetries}
\label{sec:uniaxial1}

We recall the governing partial differential equations for uniaxial solutions of the one-constant LdG Euler-Lagrange equations from \cite{lamy2015uniaxial}. 
We are seeking nontrivial  uniaxial solutions 
\begin{equation}\label{Q_U}
\Qvec(x) = s(\x) (\n(\x) \otimes \n(\x) - \frac{1}{3} \mathbf{I}), \quad \x \in \Omega
\end{equation}
for the Euler-Lagrange equations (\ref{E-L}) in $\mathbb{R}^3$. 

Substituting (\ref{Q_U}) into (\ref{E-L}), we get
\begin{equation}
\tr(\Qvec^2) = \frac{2}{3} s^2, \quad  \Qvec_{ik}\Qvec_{kj} - \frac{1}{3} \delta_{ij} \tr(\Qvec^2) = \frac{1}{3} s^2 (\n \otimes \n - \frac{1}{3} {\mathbf{I}}),
\end{equation}
\begin{equation}
\Delta \Qvec = \Delta s (\n \otimes \n - \frac{1}{3} \mathbf{I}) + 4 \n \odot (\nabla s \cdot \nabla) \n) + 2 s (\n \odot (\Delta \n)) + 2 s (\partial_k \n \otimes \partial_k \n),
\end{equation}
where $\odot$ denotes the symmetric tensor product, i.e. $(\n \odot \m)_{ij} = (n_i m_j + n_j m_i) / 2$.

Following \cite{lamy2015uniaxial} and rearranging the terms, we get
\begin{equation}
M_1 + M_2 + M_3 = 0,
\end{equation}
where
\begin{equation}\label{decomp1}
\begin{aligned}
& M_1 = \Bigl( \Delta s - 3 |\nabla \n|^2 s - (ts - \sqrt{6} s^2 + \frac{4}{3} s^3) \Bigr) \left(\n \otimes \n  - \frac{1}{3} \mathbf{I} \right), \\
& M_2 = 2 \n \odot \Bigl( s \Delta \n + 2 (\nabla s \cdot \nabla) \n + s |\nabla \n|^2 \n \Bigr), \\ 
& M_3 = s \Bigl( 2 \sum_{k = 1}^{3}\partial_k \n \otimes \partial_k \n + |\nabla \n|^2 \left( \n \otimes \n - \mathbf{I} \right) \Bigr). \\
\end{aligned}
\end{equation}

 The unit-length constraint $|\n|^2 = 1$ implies that
\begin{equation}
\begin{aligned}
& (\nabla \n)^{\rm T} \n = \bf{0}, \\
& \n \cdot \Delta \n + |\nabla \n|^2 = 0 \\
\end{aligned}
\end{equation}
for $(\nabla \n)_{ij} = \pp_j n_i = n_{i, j}$, so that
\begin{equation}
\begin{aligned}
 \n \cdot (s \Delta \n + 2 (\nabla s \cdot \nabla) \n + s |\nabla \n|^2 \n) &  = s (\n \cdot \Delta \n + (\n \cdot \n) |\nabla \n|^2  ) + 2 \n \cdot \left((\nabla \n) \nabla s \right) \\ 
                                                                       & =  s (\n \cdot \Delta \n + |\nabla \n|^2) +  2 \left((\nabla \n)^{\mathrm{T}} \n \right) \cdot \nabla s = 0. \\
\end{aligned}
\end{equation}

Thus we have
\begin{equation}
\begin{aligned}
  & M_1 \in  V_1 = \text{span} \left \{ \n \odot \n - \frac{1}{3}\mathbf{I} \right \}, \\
  & M_2 \in  V_2 = \text{span} \left \{ \n \odot \vvec ~ | ~ \vvec  \in \n^{\perp} \right\}, \\
  & M_3 \in  V_3 = \text{span} \left \{ \vvec  \odot \w ~ | ~ \vvec , \w \in \n^{\perp}, \tr ( \vvec  \odot \w ) = 0 \right\}. \\
\end{aligned}
\end{equation}

Since $M_1, M_2, M_3$ are $3 \times 3$ symmetric traceless  pairwise orthogonal tensors for the usual scalar product on $M_3(\mathbb{R})$, we deduce
\begin{equation}
M_1 = M_2 = M_3 = 0.
\end{equation}

Therefore, $s$ and $\n$ are solutions of \cite{lamy2015uniaxial}
\begin{equation}\label{eq_sn}
\begin{cases}
& \Delta s = 3 |\nabla\n|^2 s + ts - \sqrt{6} s^2 + \frac{4}{3} s^3 \\
& s \Delta \n + 2 (\nabla s \cdot \nabla) \n + s |\nabla \n|^2 \n = 0, \\ 
\end{cases}
\end{equation}
and in the regions where $s$ does not vanish, $\n$ satisfies the extra equation
\begin{equation}\label{eq_n}
2 \sum_{k = 1}^{3}\partial_k \n \otimes \partial_k \n + |\nabla \n|^2 (\n \otimes \n - \mathbf{I}) = \mathrm{0}.
\end{equation}

In what follows, we often work with spherical polar coordinates defined by
\begin{equation}
\label{coord}
\x = \left( r \cos\theta \sin \varphi, r \sin\theta \sin\varphi, r \cos \varphi \right),
\end{equation}
where $0\leq r < \infty$, $0 \leq \varphi \leq \pi$ and $0\leq \theta < 2 \pi$. Our first result, Proposition (\ref{prop:1}), concerns uniaxial solutions with special symmetries as described below.

Assume that (\ref{Q_U}) is a uniaxial solution with
\begin{equation}
 \n(\x) = \left( \sin f(\x) \cos g(\x),~ \sin f(\x) \sin g(\x),~ \cos f(\x) \right).
\end{equation}

Define
\begin{equation}
\begin{aligned}
&  \m(\x) = \left( \cos f( \x) \cos g( \x),~ \cos f(\x) \sin g( \x),~ - \sin f( \x) \right), \\
&  \p(\x) = \left( - \sin g(\x),~ \cos g( \x),~ 0 \right), \\
\end{aligned}
\end{equation}
then $\n$, $\m$, $\p$ are pairwise orthogonal and
\begin{equation}
\n \otimes \n + \m \otimes \m + \p \otimes \p = \mathbf{I}.
\end{equation}

Direct calculations show that
\begin{equation}
\begin{aligned}
& \pp_{r} \n = \frac{\partial f}{\partial r} \m +  \frac{\partial g}{\partial r} \sin f ~ \p,  \\
& \pp_{\varphi} \n = \frac{\partial f}{\partial \varphi} \m +  \frac{\partial g}{\partial \varphi} \sin f ~ \p, \\
& \pp_{\theta} \n = \frac{\partial f}{\partial \theta} \m +  \frac{\partial g}{\partial \theta} \sin f ~ \p,  \\
\end{aligned}
\end{equation}
and
\begin{equation}
\begin{aligned}
 |\nabla \n|^2 & = |\pp_r \n|^2 + \frac{1}{r^2} |\pp_{\varphi} \n|^2 + \frac{1}{r^2 \sin^2 \varphi} |\pp_{\theta} \n|^2 =  |\nabla f|^2 + |\nabla g|^2 \sin^2 f. \\
\end{aligned} 
\end{equation}

Since
\begin{equation}
\begin{aligned}
& \sum_{k = 1}^{3}\partial_k \n \otimes \partial_k \n = \pp_r \n \otimes \pp_r \n + \frac{1}{r^2} \pp_{\varphi} \n \otimes \pp_{\varphi} \n + \frac{1}{r^2 \sin^2 \varphi} \pp_{\theta} \n \otimes \pp_{\theta} \n \\
                                                 & \qquad \qquad \qquad = |\nabla f|^2 \m \otimes \m + |\nabla g|^2 \sin^2 f ~ \p \otimes \p  + 2 \nabla f \cdot \nabla g \sin f \m \odot \p, \\ 
\end{aligned}
\end{equation} 
we have the following from (\ref{eq_n}), 
\begin{equation}
\begin{aligned}
& 2 \sum_{k = 1}^{3}\partial_k \n \otimes \partial_k \n - |\nabla \n|^2 (\n \otimes \n - \mathbf{I}) \\
& \qquad = \left( |\nabla f|^2 - |\nabla g|^2 \sin^2 f \right) (\m \otimes \m - \p \otimes \p) + 4 \nabla f \cdot \nabla g \sin f ~ \m \odot \p.
\end{aligned}
\end{equation}

Since $\m \otimes \m - \p \otimes \p$ and $\m \odot \p$ are orthogonal for the usual scalar product on $M_3 (\mathbb{R})$, in the region where $s$ does not vanish,  $f$ and $g$ satisfy
\begin{equation}\label{eq_fg}
\begin{cases}
& \nabla f \cdot \nabla g = 0 \\
& |\nabla f|^2 = |\nabla g|^2 \sin^2 f.  \\
\end{cases}
\end{equation}

We manipulate the second equation in  (\ref{eq_sn}) to get 
%
%
\begin{equation}
\left( s (\Delta f - |\nabla g|^2 \sin f \cos f ) + 2 \nabla s \cdot \nabla f  \right) \m + \left( s \Delta g \sin f + 2 ( \nabla s \cdot \nabla g) \sin f \right) \p = 0.
\end{equation}
Since $\m$ and $\p$ are orthogonal, we have
\begin{equation}
\begin{cases}
& s (\Delta f - |\nabla g|^2 \sin f \cos f ) + 2 \nabla s \cdot \nabla f = 0 \\
& s \Delta g + 2 \nabla s \cdot \nabla g  = 0. \\
\end{cases}
\end{equation}


Thus the partial differential equations for $s$, $f$, $g$ are:
\begin{equation}\label{s_f_g}
\begin{cases}
&  \Delta s = 3 \left( |\nabla f|^2 + |\nabla g|^2 \sin^2 f \right) s + \psi(s)  \\
& s \left( \Delta f - |\nabla g|^2 \sin f \cos f \right) + 2 \nabla s \cdot \nabla f = 0 \\
& s \Delta g + 2 \nabla s \cdot \nabla g  = 0 \\
& s \left( \nabla f \cdot \nabla g \right) = 0 \\
& s \left( |\nabla f|^2 - |\nabla g|^2 \sin^2 f \right) = 0,  \\
\end{cases}
\end{equation}
where
\begin{equation}
\psi(s) = ts - \sqrt{6} s^2 + \frac{4}{3} s^3.
\end{equation}

\begin{proposition}
\label{prop:1} If 
\begin{equation}\label{Ansatz_1}
\Qvec(r, \theta, \varphi) = s(r, \theta, \varphi) \left( \n(\theta, \varphi) \otimes \n(\theta, \varphi) - \frac{1}{3} \mathbf{I} \right)
\end{equation}
is a non-trivial uniaxial solution of (\ref{E-L}) with
\begin{equation}\label{Ansatz_n}
 \n (\theta, \varphi) = \left( \sin f(\varphi) \cos g(\theta),~ \sin f(\varphi) \sin g(\theta),~ \cos f(\varphi) \right),
\end{equation}
then 
\begin{equation}
f(\varphi) = \pm \varphi, \quad \frac{\dd g}{\dd \theta}= \pm 1
\end{equation}
and $s$ satisfies
\begin{equation}\label{eq_h}
 s''(r) + \frac{2}{r} s'(r) = \frac{6}{r^2} s(r)  + ts - \sqrt{6} s^2 + \frac{4}{3} s^3. 
\end{equation}
\end{proposition}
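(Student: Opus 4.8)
The plan is to push the ansatz $f=f(\varphi)$, $g=g(\theta)$ through the five equations of (\ref{s_f_g}) written in spherical coordinates, exploiting that the unit vectors in the $\varphi$- and $\theta$-directions are orthogonal. For a function of $\varphi$ alone one has $|\nabla f|^2=(f')^2/r^2$, and for a function of $\theta$ alone $|\nabla g|^2=(g')^2/(r^2\sin^2\varphi)$, while $\nabla f\cdot\nabla g=0$ identically; hence the fourth equation of (\ref{s_f_g}) is automatic. First I would read off the fifth equation: since $s\neq0$ it gives $(f'(\varphi))^2\sin^2\varphi=(g'(\theta))^2\sin^2 f(\varphi)$, whose left-hand side depends only on $\varphi$ and right-hand side only on $\theta$ after rearrangement, forcing $(g')^2\equiv\lambda^2$ for a constant $\lambda$ together with $(f')^2\sin^2\varphi=\lambda^2\sin^2 f$. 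Thus $g'=\pm\lambda$; if $\lambda=0$ then $f'\equiv0$ and $\n$ is constant, i.e. the solution is trivial, so for a non-trivial solution $\lambda\neq0$.

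Next I would extract separability of $s$ in two stages. Because $g'=\pm\lambda$ is constant, $\Delta g=g''/(r^2\sin^2\varphi)=0$, so the third equation reduces to $2(\partial_\theta s)\,g'=0$; with $\lambda\neq0$ this yields $\partial_\theta s=0$, i.e. $s=s(r,\varphi)$. To treat the second equation I would differentiate the constraint $(f')^2\sin^2\varphi=\lambda^2\sin^2 f$ to get $f''=\lambda^2\sin f\cos f/\sin^2\varphi-(f')^2\cot\varphi$, and substitute this, together with $\Delta f=(f''+\cot\varphi\,f')/r^2$ and $|\nabla g|^2\sin f\cos f=\lambda^2\sin f\cos f/(r^2\sin^2\varphi)$, into the second equation. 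The bracket then collapses to $\cot\varphi\,f'(1-f')$, so the second equation becomes $f'\bigl(s\cot\varphi(1-f')+2\partial_\varphi s\bigr)=0$; since $f$ is non-constant we cancel $f'$ and obtain $\partial_\varphi s/s=\tfrac{1}{2}\cot\varphi\,(f'-1)$, a function of $\varphi$ alone. Integrating in $\varphi$ shows $s$ is separable, $s(r,\varphi)=\rho(r)\,w(\varphi)$.

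The final and hardest step is to show that $w$ is constant, which then forces the radial-hedgehog structure; the obstacle here is precisely the quadratic and cubic bulk terms, which obstruct any separation of variables. Using the fifth equation in the form $|\nabla f|^2+|\nabla g|^2\sin^2 f=2(f')^2/r^2$, I would insert $s=\rho w$ into the first equation and divide by $\rho w$ to reach an identity $P(r)+W(\varphi)/r^2=t-\sqrt{6}\,\rho(r)w(\varphi)+\tfrac{4}{3}\rho(r)^2w(\varphi)^2$, where $P(r)=(\rho''+2\rho'/r)/\rho$ and $W(\varphi)=(w''+\cot\varphi\,w'-6(f')^2w)/w$. I expect to defeat the nonlinearity by differentiating this identity first in $\varphi$ and then in $r$, eliminating $W'/w'$ between the two resulting relations to obtain $\sqrt{6}\,(2\rho/r+\rho')=\tfrac{16}{3}\,\rho\,w\,(\rho/r+\rho')$, whose left-hand side depends only on $r$. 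If $w$ were non-constant, fixing $r$ and letting $\varphi$ range over an interval where $w'\neq0$ would force both $\rho/r+\rho'=0$ and $2\rho/r+\rho'=0$, i.e. $\rho\propto1/r$ and $\rho\propto1/r^2$ simultaneously, impossible for $\rho\not\equiv0$. Hence $w$ is constant and $s=s(r)$. Then $\partial_\varphi s=0$ in the relation $\partial_\varphi s/s=\tfrac{1}{2}\cot\varphi(f'-1)$ gives $f'\equiv1$, and substituting $f'=1$ into $(f')^2\sin^2\varphi=\lambda^2\sin^2 f$ forces $\lambda^2=1$ and $f=\varphi$; modulo the orientation choices this is exactly $f=\pm\varphi$, $\mathrm{d}g/\mathrm{d}\theta=\pm1$. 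Finally, with $f'=1$, $\lambda=1$ and $s=s(r)$ the first equation reduces to $s''+\tfrac{2}{r}s'=\tfrac{6}{r^2}s+ts-\sqrt{6}\,s^2+\tfrac{4}{3}s^3$, which is precisely (\ref{eq_h}).
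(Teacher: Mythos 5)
Your opening moves coincide with the paper's: separating the fifth equation of (\ref{s_f_g}) to get $g'=\pm\lambda$ and $\sin\varphi\, f'=\pm\lambda\sin f$, and using $\Delta g=0$ in the third equation to get $\partial_\theta s=0$. But there is an algebra slip in your differentiation of the constraint, and everything after it rests on the slip. Differentiating $(f')^2\sin^2\varphi=\lambda^2\sin^2 f$ gives $2f'f''\sin^2\varphi+2(f')^2\sin\varphi\cos\varphi=2\lambda^2\sin f\cos f\, f'$; dividing by $2f'\sin^2\varphi$ yields
$f''=\lambda^2\sin f\cos f/\sin^2\varphi- f'\cot\varphi$,
with $f'$, not $(f')^2$, in the last term. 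With the correct $f''$, the bracket $\Delta f-|\nabla g|^2\sin f\cos f$ vanishes \emph{identically} (this is what the paper computes), so the second equation of (\ref{s_f_g}) reduces to $2\nabla s\cdot\nabla f=\tfrac{2}{r^2}f'\,\partial_\varphi s=0$, giving $\partial_\varphi s=0$ outright. Your relation $\partial_\varphi s/s=\tfrac12\cot\varphi\,(f'-1)$ is therefore false, and with it falls the separable ansatz $s=\rho(r)w(\varphi)$, the two-fold differentiation identity $\sqrt{6}(2\rho/r+\rho')=\tfrac{16}{3}\rho w(\rho/r+\rho')$, and, critically, your \emph{only} argument that $f'\equiv 1$.

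Once the slip is repaired you have $s=s(r)$, but no control on $f$ beyond $\sin\varphi\,f'=C_1\sin f$, which admits the genuinely non-constant-slope family $\tan(f/2)=c\,\tan^{C_1}(\varphi/2)$ for every $C_1$; some further input is needed to exclude $|C_1|\neq 1$. This is exactly where the paper's proof does its remaining work: since $s=s(r)$ is non-trivial, the first equation of (\ref{s_f_g}) forces $|\nabla\n|^2=2(f')^2/r^2$ to be independent of $\varphi$, hence $f'=C_2$ constant; then the compatibility condition $C_1\sin(C_2\varphi+C_3)=C_2\sin\varphi$ for all $\varphi$, differentiated twice, forces $C_2^2=1$, and back-substitution gives $C_1=\pm1$, $C_3\in\pi\mathbb{Z}$, i.e. $f=\pm\varphi$, $\dd g/\dd\theta=\pm1$, after which (\ref{eq_h}) follows. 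Your proposal contains no substitute for this step (your elaborate defense against the bulk nonlinearity is not needed here, since $\partial_\varphi s=0$ comes for free), so as written the argument both relies on an incorrect identity and, once that identity is removed, has a genuine gap at the identification of $f$ and $g$.
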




\begin{rem}
Since $\nabla f \cdot \nabla g = 0$, we only need to assume that $g = g(\theta)$ or $f = f(\varphi)$ in (\ref{Ansatz_n}).
\end{rem}

\begin{proof}

From $|\nabla f|^2 = |\nabla g|^2 \sin^2 f$, we have
\begin{equation}\label{eq:2}
\left(\frac{\dd f}{\dd \varphi}\right)^2 = \frac{\sin^2 f(\varphi)}{\sin^2 \varphi}\left(\frac{\dd g}{\dd \theta}\right)^2.
\end{equation}

Since we have assumed that $f = f(\varphi)$ and $g = g(\theta)$, equation (\ref{eq:2}) further simplifies to
\begin{equation}\label{f_g_s_v}
\begin{aligned}
& \frac{\sin \varphi}{\sin f(\varphi)} \frac{\dd f}{\dd \varphi} = C_1, \quad \frac{\dd g}{\dd \theta}  = \pm C_1, \\
\end{aligned}
\end{equation}
where $C_1$ is some constant.

From (\ref{f_g_s_v}), we have 
\begin{equation}
\begin{aligned}
\dfrac{\dd^2 f}{\dd \varphi^2} = C_1^2 \dfrac{\cos f \sin f}{\sin^2 \varphi} - C_1 \dfrac{\cos \varphi \sin f}{\sin^2 \varphi}.
\end{aligned}
\end{equation}
Hence,
\begin{equation}
\nabla s \cdot \nabla f =  - \frac{1}{2} s (\Delta f - |\nabla g|^2 \sin f \cos f) = -\dfrac{s}{2r^2} \left(\dfrac{\dd^2 f}{\dd \varphi^2} + \dfrac{\cos \varphi}{\sin \varphi} \dfrac{\dd f}{\dd \varphi} - C_1^2 \dfrac{\sin f \cos f}{\sin^2 \varphi} \right) = 0,  
\end{equation}
which implies that $\pp_{\varphi} s = 0$.

Similarly, from (\ref{f_g_s_v}), we have
\begin{equation}
\nabla s \cdot \nabla g = -\frac{1}{2} s \Delta g =  - \dfrac{s}{2 r^2 \sin^2 \varphi} \dfrac{\dd^2 g}{\dd \theta^2}   = 0,
\end{equation}
which implies $\pp_{\theta} s = 0$.

As we have shown that $s = s(r)$, the first equation in (\ref{eq_sn}) requires that $|\nabla \n|^2$ is independent of $\theta$ and $\varphi$, i.e. 
\begin{equation}
|\nabla \n|^2  = \frac{2}{r^2} \left(\frac{\dd f}{\dd \varphi}\right)^2 = C(r),
\end{equation}
where $C(r)$ is independent with $\theta$ and $\varphi$. Hence, 
\begin{equation}
\frac{\dd f}{\dd \varphi} = C_2
\end{equation}
for some constant $C_2$.

Recalling (\ref{f_g_s_v}), we have
\begin{equation}\label{eq_C1C2}
C_1 \sin (C_2 \varphi + C_3) = C_2 \sin \varphi,
\end{equation}
where $C_3$ is a real constant.
Computing the second derivatives of both sides, we have
\begin{equation}
- C_1 C_2^2 \sin(C_2 \varphi + C_3) = - C_2 \sin \varphi = -C_1 \sin (C_2 \varphi + C_3),
\end{equation}
which implies that $C_2^2 = 1$. 

Referring back to (\ref{eq_C1C2}), we have
\begin{equation}
C_1 \sin (\varphi + C_3) = \sin \varphi \quad \text{or} \quad C_1 \sin (- \varphi + C_3) = - \sin \varphi,
\end{equation}
which implies that $C_1 = 1, C_3 = k \pi$ ($k$ is even) or  $C_1 = - 1, C_3 = k \pi$ ($k$ is odd).

Since $\n$ is equivalent to $- \n$ in the LdG theory, we can take $C_3 = 0$ without loss of generality. Hence, we get 
\begin{equation}\label{C1C2}
f(\varphi) = \pm \varphi, \quad \dfrac{\dd g}{\dd \theta} = \pm 1.
\end{equation}


The existence of a solution for equation (\ref{eq_h}) with suitable boundary conditions has been proven in several papers e.g. \cite{majumdar2012radial, lamy2013some, ignat2014uniqueness}. 
\end{proof}

\begin{corollary}
\label{c1}
Let $\Qvec$ be a smooth non-trivial uniaxial solution of (\ref{E-L}) of the form (\ref{Q_U}) with
$$\n = \left( \sin f \cos g, \sin f \sin g, \cos f \right). $$

If $f= f(\varphi)$ and $s = s(r)$ with $s \neq 0$ for $r > 0$, then we necessarily have that
\begin{equation}
\label{eq:c1}
g(\theta) = \pm \theta + C, \quad f(\varphi) = \pm \varphi
\end{equation}
for some real constant $C$.
\end{corollary}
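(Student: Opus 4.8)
The plan is to show that the two standing hypotheses $f=f(\varphi)$ and $s=s(r)$ force $g$ to collapse to a function of $\theta$ alone, after which the claim follows immediately from Proposition~\ref{prop:1}. First I would use $s\neq 0$ on $\{r>0\}$ to divide the last two equations of (\ref{s_f_g}) by $s$, recovering the pointwise constraints $\nabla f\cdot\nabla g=0$ and $|\nabla f|^2=|\nabla g|^2\sin^2 f$ everywhere on $\{r>0\}$. Since $f=f(\varphi)$, the gradient $\nabla f$ is parallel to the polar unit vector, so $\nabla f\cdot\nabla g=\frac{1}{r^2}\frac{\dd f}{\dd\varphi}\,\pp_\varphi g$; on the set where $\frac{\dd f}{\dd\varphi}\neq 0$ this forces $\pp_\varphi g\equiv 0$, i.e. $g=g(r,\theta)$. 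On any open set where $\frac{\dd f}{\dd\varphi}=0$ the norm constraint degenerates and makes $\n$ locally constant, hence trivial; this is the only place the non-triviality hypothesis enters, and smoothness of solutions of (\ref{E-L}) then propagates $\pp_\varphi g\equiv 0$.

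Next I would exploit $s=s(r)$. Because $\nabla s$ is radial it is orthogonal to $\nabla f$, so $\nabla s\cdot\nabla f=0$ and the $f$-equation in (\ref{s_f_g}) reduces to $\Delta f=|\nabla g|^2\sin f\cos f$. Substituting $|\nabla g|^2=(\frac{\dd f}{\dd\varphi})^2/(r^2\sin^2 f)$ from the norm constraint, the radial factors cancel and $f$ obeys the autonomous equation
\begin{equation}
\frac{\dd^2 f}{\dd\varphi^2}+\cot\varphi\,\frac{\dd f}{\dd\varphi}=\Bigl(\frac{\dd f}{\dd\varphi}\Bigr)^2\cot f .
\end{equation}
Introducing $F(\varphi)=(\frac{\dd f}{\dd\varphi})^2/\sin^2 f=r^2|\nabla g|^2$, I would differentiate the identity $(\frac{\dd f}{\dd\varphi})^2=F\sin^2 f$ and eliminate $\frac{\dd^2 f}{\dd\varphi^2}$ using the equation above (multiplying the latter by $\frac{\dd f}{\dd\varphi}$ so as not to divide by it); I expect the nonlinear terms to cancel and leave the linear relation $F'=-2\cot\varphi\,F$, whence $F=c^2/\sin^2\varphi$ for some constant $c$.

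Finally I would insert this into $r^2|\nabla g|^2=r^2(\pp_r g)^2+(\pp_\theta g)^2/\sin^2\varphi$ for $g=g(r,\theta)$: equating with $c^2/\sin^2\varphi$ and comparing the $\varphi$-independent part against the $1/\sin^2\varphi$ part (linearly independent functions of $\varphi$) forces $\pp_r g=0$ and $(\pp_\theta g)^2=c^2$, so $g=g(\theta)$. At this point $f=f(\varphi)$, $g=g(\theta)$ and $s=s(r)$ satisfy the hypotheses of Proposition~\ref{prop:1}, which delivers $f=\pm\varphi$ and $\frac{\dd g}{\dd\theta}=\pm 1$, i.e. $g=\pm\theta+C$. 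The main obstacle is precisely this last reduction: a priori $g$ could depend on $r$, and indeed a spurious branch $g=\pm c\,\theta\pm k\log r+\mathrm{const}$ solves the orthogonality and norm constraints. Ruling out the logarithmic radial term, that is proving $k=0$, is exactly what the identity $F=c^2/\sin^2\varphi$ accomplishes, and extracting that identity cleanly from the $f$-equation rather than through heavier separation-of-variables bookkeeping is where the real content beyond Proposition~\ref{prop:1} lies.
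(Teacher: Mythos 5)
Your proposal is correct, and it reaches (\ref{eq:c1}) by a genuinely different route from the paper's. The paper fixes $r_0>0$, uses smoothness of $s$ and $\n$ on $\{\Qvec\neq 0\}$ to expand $g(r,\theta)=g_0(\theta)+g_1(\theta)(r-r_0)+O((r-r_0)^2)$, substitutes into the norm constraint and lets $r\to r_0$ to conclude $g_1\equiv 0$ (hence $\pp_r g=0$), and only then invokes the $f$-equation to force $C_1=\pm 1$. You instead bring the $f$-equation in at the start: since $\nabla s$ is radial and $\nabla f$ polar, it reduces to the autonomous ODE $f''+\cot\varphi\,f'=(f')^2\cot f$, whose first integral $F=(f')^2/\sin^2 f=c^2/\sin^2\varphi$ (your derivation of $F'=-2\cot\varphi\,F$ is correct, and multiplying by $f'$ rather than dividing is the right precaution) converts the norm constraint into $r^2(\pp_r g)^2+(\pp_\theta g)^2/\sin^2\varphi=c^2/\sin^2\varphi$, after which linear independence of $1$ and $1/\sin^2\varphi$ kills $\pp_r g$ outright. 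What your route buys is exactly the point you flag at the end: the branch $g=\pm c\,\theta\pm k\ln r$ is compatible with both pointwise constraints provided $(f')^2=\sin^2 f\,(k^2+c^2/\sin^2\varphi)$, and the paper's one-line inference from $g_1^2=\bigl((f')^2-(g_0')^2\bigr)/(r_0^2\sin^2\varphi)$ to $g_1=0$ does not by itself exclude it, since $(f'(\varphi))^2=(g_0')^2+k^2\sin^2\varphi$ would make that right-hand side $\varphi$-independent with $g_1\neq 0$; your first integral forces $k^2\sin^2 f=0$ and closes this loophole cleanly. Two small caveats: on an open set where $f'=0$ or $\sin f=0$ the director is locally constant, and to propagate triviality (or $\pp_\varphi g=0$) across such sets you should invoke analyticity of solutions of (\ref{E-L}), as the paper does via \cite{lamy2015uniaxial}, rather than mere smoothness; and when solving $F'=-2\cot\varphi\,F$, note that if $F$ vanishes at one point it vanishes identically by uniqueness for linear ODEs, which is again the trivial branch. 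With $g=g(\theta)$ established, your appeal to Proposition \ref{prop:1} is legitimate and delivers $f=\pm\varphi$, $g=\pm\theta+C$.
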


\begin{proof}
If $s \neq 0$, then we have $\pp_{\varphi} g = 0$ from $\nabla f \cdot \nabla g = 0$ and

\begin{equation}\label{g_fs_s}
\begin{cases}
& (\pp_r g)^2 + \dfrac{1}{r^2 \sin^2 \varphi} (\pp_{\theta} g)^2 = \dfrac{1}{r^2 \sin^2 \varphi}\left( \dfrac{\dd f}{\dd \varphi} \right)^2 \\ 
& s \Bigl( \pp_r^2 g + \dfrac{2}{r} \pp_r g + \dfrac{1}{r^2 \sin^2 \varphi} \pp_{\theta}^2 g \Bigr) + 2 \pp_r s  \pp_r g = 0. \\
\end{cases}
\end{equation}

For fixed $r_0 > 0$, since the uniaxial $\Qvec$ is smooth and $\Qvec \neq 0$ on $B(r_0, \delta)$ for some $\delta > 0$, we can have that  $s$ and $\n$ are smooth on $B(r_0, \delta)$ \cite{amaz, lamy2015uniaxial}. Hence, on $B(r_0, \delta)$, we have:
\begin{equation}\label{g_exp}
g(r, \theta) = g_0(\theta) + g_1 (\theta) (r - r_0) + g_2(\theta) (r - r_0)^2 + O((r - r_0)^3), \quad |r - r_0| < \delta.
\end{equation}
Substituting (\ref{g_exp}) into the first equation in (\ref{g_fs_s}), and letting $r \rightarrow r_0$, we have
\begin{equation}
  g_1^2 = \frac{1}{r_0^2 \sin^2 \varphi} \left(\left( \frac{\dd f}{\dd \varphi} \right)^2 - \left( \frac{\dd g_0}{\dd \theta} \right)^2 \right). \\
\end{equation}

Since $g$ is independent with $\varphi$, we have $g_1 = 0$. 
By the arbitrariness of $r_0$, we get $\pp_r g = 0$ for $\forall r > 0$. Hence
\begin{equation}
\frac{\dd g}{\dd \theta} = \pm C_1, \quad \frac{\dd f}{\dd \varphi} =  C_1 
\end{equation} 
for some real constant $C_1$.

Recalling the second equation in (\ref{s_f_g}), we have
\begin{equation}
 C_1 \sin (C_1 \varphi + C_3) \cos(C_1 \varphi + C_3) = \sin \varphi \cos \varphi, \quad \forall \varphi
\end{equation}
for some constant C$_3$. Hence, we have $C_1 = \pm 1$ by taking $C_3 = 0$ without loss of generality.

\end{proof}

\begin{rem}
A solution $(s, f, g)$ of the system of equations (\ref{s_f_g}) can be regarded as a critical point of the functional 
\begin{equation}\label{fun_sfg}
E(x, \mathbf{u}(x), {\rm D} \mathbf{u}(x)) = \int_{\Omega} \left(  \frac{1}{3} ts^2 - \frac{2 \sqrt{6}}{9} s^3 + \frac{2}{9} s^4 + \frac{1}{3} |\nabla s|^2  + s^2 (|\nabla f|^2 + |\nabla g|^2 \sin^2 f) \right) \dd \x,
\end{equation}
in the constrained admissible class 
\begin{equation}\label{ad_sfg}
\mathcal{A}_u := \left\{ s, f, g \in W^{1,2}(\Omega, \mathbb{R}) ~|~ s (\nabla f \cdot \nabla g) = 0,~ s (|\nabla f|^2 - |\nabla g|^2 \sin^2 f) = 0   \right\},
\end{equation} 
subject to Dirichlet boundary conditions, where $\mathbf{u} = (s, f, g)$. The constraints in (\ref{ad_sfg}) 
are nonholonomic  \cite{giaquinta2013calculus} and are difficult to deal with.
\end{rem}

Indeed, according to the calculations in \cite{Biscari2006}, it is difficult to find unit-vector fields $\n$ that solve (\ref{eq_n}).  
In the remainder of this subsection, we discuss the ``third dimension escape'' solution \cite{Cladis1972} in greater detail. 
The ``third dimension escape'' solution is known to be a non-trivial explicit solution of the extra equation (\ref{eq_n}) \cite{Biscari2006, lamy2015uniaxial}. However, we cannot have an order parameter $s$ such that $(s, \n)$ solves (\ref{eq_sn}).  Theorem 4.1 in \cite{lamy2015uniaxial} suggests that this solution cannot be purely uniaxial if $\pp_z s = 0$. Here, we provide an alternative proof by using (\ref{s_f_g}), without assuming $\pp_z s = 0$.

The ``escape into third dimension'' uniaxial director is given in cylindrical coordinates $(\rho, \theta, z)$ by
\begin{equation}
\label{third_dim}
\n(\rho, \theta, z) = \cos \Psi(\rho) \mathbf{e}_r + \sin \Psi(\rho) \mathbf{e}_z \quad \text{with} \quad \rho \frac{\dd \Psi}{\dd \rho} = \cos \Psi,
\end{equation}
where $\e_r = (\cos \theta, \sin \theta, 0)$, $\e_z = (0, 0, 1) \in \mathbb{R}^3, 0 \leq \rho \leq 1$. Hence,
\begin{equation}
\n(r, \theta, \varphi) = (\sin f \cos g, \sin f \sin g, \cos f)
\end{equation}
with
\begin{equation}\label{es_fg}
f = \frac{\pi}{2} - \Psi(r \sin \varphi), \quad g = \theta.
\end{equation}

Therefore,
\begin{equation}
\begin{aligned}
& \pp_r f = - \frac{1}{r} \cos \Psi, \quad \pp_{\varphi} f = -\frac{\cos \varphi}{\sin \varphi} \cos \Psi, \\
& \pp_r^2 f = \frac{1}{r^2} (\cos \Psi + \sin \Psi \cos \Psi), \quad \pp_{\varphi}^2 f = \frac{1}{\sin^2 \varphi} \cos \Psi + \frac{\cos^2 \varphi}{\sin^2 \varphi} \sin \Psi \cos \Psi. \\
\end{aligned}
\end{equation}
Direct calculations show that (\ref{es_fg}) satisfies (\ref{eq_n}) and 
\begin{equation}
\Delta f - |\nabla f|^2 \cos f / \sin f = 0, \quad \Delta g = 0.
\end{equation}

Assume there exists a scalar order parameter $s$ such that the pair $(s, \n)$ satisfies (\ref{eq_sn}), then (\ref{eq_sn}) requires that $s$ satisfies 
\begin{equation}\label{exp_no_s}
\begin{aligned}
& \Delta s = \frac{6}{r^2} \dfrac{\cos^2 \Psi}{\sin^2 \varphi} s + \psi(s), \\ 
& \nabla s \cdot \nabla f = 0 ~~ \Rightarrow ~~ \pp_r s  = - \frac{1}{r} \dfrac{\cos \varphi}{\sin \varphi} \pp_{\varphi} s, \\
& \nabla s \cdot \nabla g = 0 ~~ \Rightarrow ~~ \pp_{\theta} s = 0. \\
\end{aligned}
\end{equation}
In Cartesian coordinates, we have
\begin{equation}
\begin{aligned}
& \pp_x s = \cos \theta \sin \varphi \pp_r s + \frac{\cos \theta \cos \varphi}{r} \pp_{\varphi} s = 0, \\
& \pp_y s = \sin \theta \sin \varphi \pp_r s + \frac{\sin \theta \cos \varphi}{r} \pp_{\varphi} s = 0, \\
& \pp_z s = \cos \varphi \pp_r s - \frac{\sin \varphi}{r} \pp_{\varphi} s = - \frac{1}{r \sin \varphi} \pp_{\varphi} s. \\
\end{aligned}
\end{equation}
Hence, the first equation of (\ref{exp_no_s}) can be recast as
\begin{equation}
\pp_z^2 s = \frac{6}{r^2} \frac{\cos^2 \Psi(x, y)}{\sin^2 \varphi} s + \psi(s). 
\end{equation}

By taking derivatives with respect to $x$ and $y$ on both sides, we have 
\begin{equation}
\begin{aligned}
& s \frac{\pp }{\pp x} \left(\frac{6}{r^2} \frac{\cos^2 \Psi(x, y)}{\sin^2 \varphi} \right) = 0  \\
& s \frac{\pp}{\pp y} \left( \frac{6}{r^2} \frac{\cos^2 \Psi(x, y)}{\sin^2 \varphi} \right) = 0 \\
\end{aligned}
\quad \quad \quad \forall x,~y, 
\end{equation}
which implies that $s \equiv 0$. Hence, we cannot find a non-trivial $s$ for which $(s, \n)$, with $\n$ as given in (\ref{third_dim}), is a solution of (\ref{eq_sn}).



\subsection{A New Perspective for the Extra Equation (\ref{eq_n})}
\label{sec:extra_equation}
Consider
\begin{equation}\label{uni_sn}
\Qvec = s(\x) \Bigl( \n(\x) \otimes \n(\x) - \frac{1}{3} \mathbf{I} \Bigr) + \beta(\x) \Bigl( \m (\x) \otimes \m (\x) - \p (\x) \otimes \p(\x) \Bigr), \quad \forall \x \in \Omega \subset  \mathbb{R}^3, 
\end{equation}
where $\n$ is the leading eigenvector of $\Qvec$ (with the largest eigenvalue in terms of magnitude), $0 \leq |\beta| \leq \frac{1}{3} |s|$. In the case that the eigenvalues of $\Qvec$ are $\frac{2|s|}{3}, 0, -\frac{2|s|}{3}$ respectively, we define the eigenvector
corresponding to the eigenvalue $\frac{2|s|}{3}$ as the leading eigenvector, which implies that for $s < 0$, we have $|\beta| < \frac{1}{3} |s|$.
Inspired by \cite{Biscari2006}, we have the following result:
\begin{proposition}
Let $\Qvec$ 
be a global minimizer of LdG free energy in the admissible class $\mathcal{A}$, for which the leading eigenvector $\n(\x)$ satisfies the extra equation (\ref{eq_n}) in $\Omega$,
subject to uniaxial boundary conditions 
\begin{equation}
s(\x) = s_{+} > 0, \quad \beta(\x) = 0, \quad \forall \x \in \pp \Omega.
\end{equation}
Then $\Qvec$ is necessarily uniaxial with $\beta \equiv 0$ everywhere in $\Omega$ for $t \geq 0$.
\end{proposition}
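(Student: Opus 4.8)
The plan is to compare the energy of the minimizer $\Qvec$ with that of its uniaxial truncation
\begin{equation}
\Qvec_U := s\Bigl(\n\otimes\n-\tfrac13\mathbf{I}\Bigr),
\end{equation}
obtained by switching off the biaxial amplitude $\beta$ while keeping $s$ and the leading eigenvector $\n$. Because $\beta=0$ on $\partial\Omega$, the field $\Qvec_U$ has the same boundary trace as $\Qvec$ and lies in $\mathcal{A}$ (with $\n$ a simple, hence locally smooth, eigenvector on $\{\beta\neq0\}$), so global minimality gives $\mathcal{F}[\Qvec]\le\mathcal{F}[\Qvec_U]$. I will show that the hypotheses force the opposite inequality, with equality precisely when $\beta\equiv0$; comparing the two then yields $\beta\equiv0$.

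First I would evaluate both energies in the orthonormal eigenframe $\{\n,\m,\p\}$. The eigenvalues of $\Qvec$ are $\tfrac{2s}{3},-\tfrac{s}{3}+\beta,-\tfrac{s}{3}-\beta$, so $\tr(\Qvec^2)=\tfrac23 s^2+2\beta^2$ and $\tr(\Qvec^3)=\tfrac29 s^3-2s\beta^2$, whence the bulk contribution to the energy gap is
\begin{equation}
f_{\rm b}(\Qvec)-f_{\rm b}(\Qvec_U)=\beta^2\Bigl(t+2\sqrt6\,s+\tfrac43 s^2+2\beta^2\Bigr).
\end{equation}
Writing $\partial_k\n=a_k\m+b_k\p$ and setting $\Sigma_a=\sum_k a_k^2$, $\Sigma_b=\sum_k b_k^2$, $\Sigma_{ab}=\sum_k a_kb_k$ and $\Sigma_c=\sum_k(\p\cdot\partial_k\m)^2$, a decomposition of $\nabla\Qvec$ in this frame gives
\begin{equation}
\tfrac12\bigl(|\nabla\Qvec|^2-|\nabla\Qvec_U|^2\bigr)=|\nabla\beta|^2+\beta^2(\Sigma_a+\Sigma_b)+4\beta^2\Sigma_c+2s\beta(\Sigma_b-\Sigma_a).
\end{equation}
The last term is sign-indefinite and is the only obstruction to nonnegativity of the elastic gap.

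The decisive step is that the extra equation (\ref{eq_n}), rewritten in the frame $\{\n,\m,\p\}$ exactly as in the derivation of (\ref{eq_fg}), is equivalent to $\Sigma_a=\Sigma_b$ together with $\Sigma_{ab}=0$: substituting $\partial_k\n=a_k\m+b_k\p$ and $\n\otimes\n-\mathbf{I}=-(\m\otimes\m+\p\otimes\p)$ reduces (\ref{eq_n}) to $(\Sigma_a-\Sigma_b)(\m\otimes\m-\p\otimes\p)+2\Sigma_{ab}\,\m\odot\p=0$, and projecting onto these two orthogonal directions gives the claim. Under the hypothesis that $\n$ satisfies (\ref{eq_n}) the cross term $2s\beta(\Sigma_b-\Sigma_a)$ therefore vanishes and the elastic gap is nonnegative. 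If in addition $s\ge0$, then since $t\ge0$ the bracket in the bulk gap is nonnegative, and integrating yields $\mathcal{F}[\Qvec]-\mathcal{F}[\Qvec_U]\ge\int_\Omega|\nabla\beta|^2\ge0$; combined with minimality this forces $\int_\Omega|\nabla\beta|^2=0$, and since $\beta=0$ on $\partial\Omega$ we conclude $\beta\equiv0$.

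The main obstacle is precisely the positivity $s\ge0$ (equivalently $\tr(\Qvec^3)\ge0$) on the biaxial set $\{\beta\neq0\}$, and this is where $t\ge0$ must be used. The difficulty is genuine: the bulk bracket is negative for $s\in(-\tfrac{3\sqrt6}{2},0)$, an interval that for $t\in[0,1]$ contains the whole admissible range $[-s_+,0)$ coming from the a priori bound $|s|\le s_+$, so the comparison really does fail if $s$ is allowed to change sign. I would therefore establish $s\ge0$ as a separate a priori estimate, propagating the positive boundary datum $s=s_+$ inward by a maximum-principle argument valid for $t\ge0$. The delicate point is that differentiating (\ref{E-L}) to compute $\Delta\tr(\Qvec^3)$ produces sign-indefinite gradient terms, so positivity of the leading eigenvalue does not follow from a naive scalar maximum principle and is the crux of the argument, the remaining steps being the eigenframe bookkeeping above.
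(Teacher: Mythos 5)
Your frame computations are correct and consistent with the paper: the gap identity
\begin{equation}
\tfrac12\bigl(|\nabla\Qvec|^2-|\nabla\Qvec_U|^2\bigr)=|\nabla\beta|^2+\beta^2\bigl(|\nabla\n|^2+4\Sigma_c\bigr)+2s\beta(\Sigma_b-\Sigma_a)
\end{equation}
matches the paper's expansion of $|\nabla\Qvec|^2$ (where the cross term appears as $-4s\beta(\p\cdot G\p-\m\cdot G\m)$ with $G=(\nabla\n)(\nabla\n)^{\rm T}$), and your reformulation of (\ref{eq_n}) as $\Sigma_a=\Sigma_b$, $\Sigma_{ab}=0$ is exactly how the paper kills that term: it writes $G=\frac12|\nabla\n|^2(\m\otimes\m+\p\otimes\p)$, so $G\m=\frac12|\nabla\n|^2\m$ and $G\p=\frac12|\nabla\n|^2\p$. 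Your endgame differs harmlessly from the paper's: you force $\int_\Omega|\nabla\beta|^2=0$ by direct comparison with the truncation $\beta\mapsto0$, whereas the paper derives the Euler--Lagrange equation for $\beta$ in the restricted class, observes $\Delta\beta^2\ge0$ once $t+2\sqrt6\,s+\frac43 s^2\ge0$, and applies the weak maximum principle to the subharmonic function $\beta^2$. Both routes are valid, and both hinge on the same prerequisite, $s\ge0$.

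That prerequisite is the genuine gap: you never prove $s\ge0$, and the tool you propose for it is the wrong one. Differentiating (\ref{E-L}) to run a scalar maximum principle on $\tr(\Qvec^3)$ indeed fails, but no PDE argument is needed, because global minimality in $\mathcal{A}$ can be used a second time with a comparison map of precisely the type you already employ. On $\Omega^{*}=\{\x\in\Omega:\ s(\x)<0\}$, which by the boundary datum $s=s_+>0$ does not meet $\pp\Omega$, replace $\Qvec$ by $\widetilde{\Qvec}$ obtained by flipping $s\mapsto-s$ while keeping $\beta$, $\n$, $\m$, $\p$; this stays in $\mathcal{A}$ with the same boundary values (and remains $W^{1,2}$ since $s=0$ on the interior boundary of $\Omega^{*}$). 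All even-in-$s$ terms ($\tr(\Qvec^2)$, $(\tr(\Qvec^2))^2$, $|\nabla s|^2$, $s^2|\nabla\n|^2$) are unchanged, and the only sign-indefinite elastic term --- your $4s\beta(\Sigma_b-\Sigma_a)$ --- vanishes by hypothesis (\ref{eq_n}), so only the cubic bulk term changes:
\begin{equation}
\mathcal{F}(\widetilde{\Qvec})-\mathcal{F}(\Qvec)=\int_{\Omega^{*}}4\sqrt6\Bigl(\frac19 s^3-s\beta^2\Bigr)\dd\x
=\int_{\Omega^{*}}4\sqrt6\, s\Bigl(\frac19 s^2-\beta^2\Bigr)\dd\x<0,
\end{equation}
strictly, because the leading-eigenvector normalization $0\le|\beta|\le\frac13|s|$ (strict whenever $s<0$, by the convention fixed in the paper for the degenerate case) gives $\frac19 s^2-\beta^2>0$ on $\Omega^{*}$. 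This contradicts minimality, so $\Omega^{*}$ is empty and $s\ge0$. The missing idea, then, is not a refined maximum principle but the observation that the normalization of $\n$ as leading eigenvector supplies the pointwise bound $\beta^2<\frac19 s^2$ on $\{s<0\}$, making the sign-flip competitor strictly energy-decreasing; with that inserted, your proof closes and is otherwise a legitimate variant of the paper's.
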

\begin{rem}
For a director field $\n(\x)$ in $\Omega$, we may not have critical points of LdG free energy in the admissible class $\mathcal{A}$.
\end{rem}

\begin{proof}

For $\Qvec$ of the form (\ref{uni_sn}), we can check that
\begin{equation}
\begin{aligned}
|\nabla \Qvec|^2 & = \frac{2}{3}|\nabla s|^2 + 2 |\nabla \beta|^2 + 2 s^2 |\nabla \n|^2 \\
            & + 2 \beta^2 (|\nabla \n|^2 + 4 |(\nabla \m)^{\rm T} \p|^2) - 4 s \beta(\p \cdot G \p - \p \cdot G \m), \\
\end{aligned}
\end{equation}
where $G = (\nabla \n)(\nabla \n)^{\rm T} = \sum_{k = 1}^3 \pp_k \n \otimes \pp_k \n$.

The extra equation (\ref{eq_n}) can be written as
\begin{equation}
G = \frac{1}{2} |\nabla \n|^2 \m \otimes \m + \frac{1}{2} |\nabla \n|^2 \p \otimes \p.
\end{equation}
Since $\m$ and $\p$ are orthogonal, we easily obtain
\begin{equation}
G \m = \frac{1}{2} |\nabla \n|^2 \m, \quad G \p = \frac{1}{2} |\nabla \n|^2 \p.
\end{equation}
Hence, if the leading eigenvector $\n$ satisfies (\ref{eq_n}), then
\begin{equation}
|\nabla \Qvec|^2  = \frac{2}{3}|\nabla s|^2 + 2 |\nabla \beta|^2 + 2 s^2 |\nabla \n|^2 + 2 \beta^2 \left( |\nabla \n|^2 + 4 |(\nabla \m)^{\rm T} \p|^2 \right).
\end{equation}

Substituting  (\ref{uni_sn}) into (\ref{eq_dimless}) and using the above reduction for the one-constant elastic energy density, the LdG energy in this restricted class is
\begin{equation}\label{fun_snb}
\begin{aligned}
  \mathcal{F}(s, \beta, \n , \m) & = \int \left( \frac{t}{2} \left( \frac{6}{9} s^2 + 2 \beta^2 \right) + \sqrt{6} \left( 2 \beta^2 - \frac{2}{9} s^2 \right) s + \frac{2}{9} \left( s^2 + 3 \beta^2 \right)^2 \right) \\
                               & + \frac{1}{3}|\nabla s|^2 + |\nabla \beta|^2 + s^2 |\nabla \n|^2 + \beta^2 \left( |\nabla \n|^2 + 4 |(\nabla \m)^{\rm T} \p|^2 \right) \dd \x. \\
\end{aligned}
\end{equation}

The associated Euler-Lagrange equations for $s$ and $\beta$ are
\begin{equation}
\begin{cases}
& \Delta s = 3 |\nabla \n|^2 s + ts - \sqrt{6} s^2 + \frac{4}{3}s^3 + 4 \beta^2 s + 3 \sqrt{6} \beta^2 \\
& \Delta \beta = \left( |\nabla \n|^2 + 4 |(\nabla \m)^{\rm T} \p|^2 + t + 2\sqrt{6} s + \frac{4}{3}s^2 \right) \beta +  4 \beta^3. \\
\end{cases}
\end{equation}

We note that
\begin{equation}\label{eq_beta2}
\begin{aligned}
\Delta \beta^2 & = 2 (\nabla \beta \cdot \nabla \beta + \beta \Delta \beta) \\
              & = 2 \left( |\nabla \beta|^2 + \bigl( |\nabla \n|^2 + 4 |(\nabla \m)^{\rm T} \p|^2 + t + 2\sqrt{6} s + \frac{4}{3}s^2 \bigr) \beta^2 +  4 \beta^4 \right). \\ 
\end{aligned}
\end{equation}

In order to get the desired result, we firstly show that $s \geq 0$, which can be proved by contradiction. The proof is similar to the proof of Lemma 2 in Ref. \cite{majumdar2010equilibrium}. Let $\Omega^{*} = \{\x \in \Omega;~~s(\x) < 0 \}$ be a measurable interior subset of $\Omega$. The boundary condition implies that the subset $\Omega^{*}$ does not intersect $\pp \Omega$. Then we can consider the perturbation
\begin{equation}
\widetilde{\Qvec} = 
\begin{cases}
& s(\x) \Bigl( \n(\x) \otimes \n(\x) - \dfrac{1}{3} \mathbf{I} \Bigr) + \beta(\x) \Bigl( \m(\x)  \otimes \m(\x)  - \p(\x) \otimes \p(\x) \Bigr), \quad \forall \x \in \Omega \backslash \Omega^{*} \\ 
& - s(\x) \Bigl( \n(\x) \otimes \n(\x) - \dfrac{1}{3} \mathbf{I} \Bigr) + \beta(\x) \Bigl( \m(\x)  \otimes \m(\x)  - \p(\x) \otimes \p(\x) \Bigr), \quad \forall \x \in \Omega^{*}. \\
\end{cases}
\end{equation}
Then $\widetilde{\Qvec} \in \mathcal{A}$ and $\widetilde{\Qvec}$ coincides with $\Qvec$ everywhere outside $\Omega^{*}$, The free energy difference $\mathcal{F}(\widetilde{\Qvec}) - \mathcal{F}(\Qvec)$ is 
\begin{equation}
\begin{aligned}
\mathcal{F}(\widetilde{\Qvec}) - \mathcal{F}(\Qvec) &= \int_{\Omega^{*}} 4\sqrt{6}(\frac{1}{9} s^3 - s \beta^2) \dd \x < 0,
\end{aligned}
\end{equation}
where the last inequality holds because $s < 0$ and $\beta^2 < \dfrac{1}{9} s^2$ for $s < 0$. This contradicts the fact that $\Qvec$ is a global minimizer in the admissible class $\mathcal{A}$. Hence, $\Omega^{*}$ is empty and $s \geq 0$ everywhere in $\Omega$. So for $t \geq 0$,
\begin{equation}
t + 2\sqrt{6} s(\x) + \frac{4}{3}s(\x)^2 \geq 0, \quad \forall \x \in \Omega, 
\end{equation}
 which implies that $\Delta \beta^2 \geq 0$ and $\beta^2$ is subharmonic. 
By the weak maximum principle \cite{han2011basic}, we have $||\beta^2||_{L^{\infty} (\Omega)} \leq ||\beta^2||_{L^{\infty} (\pp \Omega)} = 0$. Hence, $\beta$ is identically zero in $\Omega$ and $\Qvec$ is necessarily uniaxial. 

\end{proof}

\subsection{An alternative approach}
 \label{sec:dimension_reduction}


Let
\begin{equation}
  \begin{aligned}
    \mathcal{S} = \{ \Qvec \in M^{3\times3} (\mathbb{R}) ~|~ \Qvec = \Qvec^{\mathrm{T}}, \tr(\Qvec) = 0 \}. \\
   \end{aligned}
\end{equation}

Consider the following 
basis for $\mathcal{S}$: 
\begin{equation}
\begin{aligned}
& \E_1 = \sqrt{\frac{3}{2}}(\mathbf{e}_z \otimes \mathbf{e}_z - \frac{1}{3} \mathbf{I}), \quad \E_2 = \sqrt{\frac{1}{2}}(\mathbf{e}_x \otimes \mathbf{e}_x - \mathbf{e}_y \otimes \mathbf{e}_y), \quad \E_3 = \sqrt{\frac{1}{2}}(\mathbf{e}_x \otimes \mathbf{e}_y + \mathbf{e}_y \otimes \mathbf{e}_x), \\
& \E_4 = \sqrt{\frac{1}{2}}(\mathbf{e}_x \otimes \mathbf{e}_z + \mathbf{e}_z \otimes \mathbf{e}_x), \quad \E_5 = \sqrt{\frac{1}{2}}(\mathbf{e}_y \otimes \mathbf{e}_z + \mathbf{e}_z \otimes \mathbf{e}_y), \\
\end{aligned}
\end{equation}
where $\e_x = (1, 0, 0),~ \e_y = (0, 1, 0),~ \e_z = (0, 0, 1) \in \mathbb{R}^3$.

For $\forall \Qvec \in \mathcal{S}$:
\begin{equation}
\Qvec(\x) = \sum_{i = 1}^{5} q_i(\x) \E_i , \quad \forall \x \in \mathbb{R}^3,
\end{equation}
thus,
\begin{equation}
\tr(\Qvec^2) = \sum_{i = 1}^5 q_i^2, \quad |\nabla \Qvec|^2 = \sum_{i = 1}^5 |\nabla q_i|^2,
\end{equation}
\begin{equation}
\begin{aligned}
\tr (\Qvec^3) & = \frac{\sqrt{6}}{6}q_1^3 + \frac{3\sqrt{2}}{4}(q_2q_4^2 - q_2q_5^2) - \frac{\sqrt{6}}{2}(q_1q_2^2 + q_1q_3^2) + \frac{\sqrt{6}}{4}(q_1q_4^2 + q_1q_5^2) + \frac{3\sqrt{2}}{2}q_3q_4q_5 \\
& = \frac{\sqrt{6}}{6}q_1^3 - \frac{\sqrt{6}}{2}(q_2^2 + q_3^2)q_1 + (\frac{\sqrt{6}}{4}q_1 + \frac{3\sqrt{2}}{4}q_2) q_4^2 + (\frac{\sqrt{6}}{4} q_1 - \frac{3\sqrt{2}}{4} q_2) q_5^2 + \frac{3\sqrt{2}}{2}q_3q_4q_5. \\
\end{aligned}
\end{equation}

Hence,
the Euler-Lagrange equations for $q_i$ ($i = 1, 2, \ldots, 5$) are given by

\begin{equation}\label{eq_qi_b}
\begin{cases}
  & \Delta q_1 =  \left( t - 6 q_1 + 2 (\sum_{k = 1}^5 q_k^2) \right) q_1 + 3 ( \sum_{k = 1}^5 q_k^2) - \dfrac{9}{2} (q_4^2 + q_5^2) \\
  & \Delta q_2 =  \left( t + 6 q_1 + 2 (\sum_{k = 1}^5 q_k^2) \right) q_2 - \dfrac{3\sqrt{3}}{2}(q_4^2 - q_5^2)  \\
  & \Delta q_3 =  \left( t + 6 q_1 + 2 (\sum_{k = 1}^5 q_k^2) \right) q_3  - 3 \sqrt{3} q_4 q_5 \\
  & \Delta q_4 =  \left( t - 3 q_1 - 3 \sqrt{3} q_2 + 2 (\sum_{k = 1}^5 q_k^2) \right) q_4 - 3 \sqrt{3} q_3 q_5 \\
  & \Delta q_5 =  \left( t - 3 q_1 + 3 \sqrt{3} q_2 + 2 (\sum_{k = 1}^5 q_k^2) \right) q_5 - 3 \sqrt{3} q_3 q_4. \\ 
\end{cases}
\end{equation}

It is known that $\Qvec$ is uniaxial, if and only if
\begin{equation}\label{tr_uni}
\tilde{\beta}(\Qvec) = \left( \tr (\Qvec^2) \right)^3 - 6 \left( \tr \Qvec^3) \right)^2 = 0,
\end{equation}
which can be viewed as the uniaxial constraints of (\ref{eq_qi_b}) (see for example \cite{amaz}).

\begin{proposition}
\label{prop:4.1} 
Let $\Omega \subset \mathbb{R}^3$ be an open set, if
\begin{equation}\label{Ansatz}
\Qvec(\x) = \sum_{i = 1}^{3} q_i(\x) \E_i, \quad \forall \x \in \mathbb{R}^3
\end{equation}
is a uniaxial solution of (\ref{E-L}), then
$\Qvec$ has 
a constant eigenframe in every connected component of $\{ \Qvec \neq 0 \}$. Moreover, if $\Omega$ is connected, then $\Qvec$ has a constant eigenframe in the whole domain. 
\end{proposition}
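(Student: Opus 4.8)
The plan is to exploit the special structure forced by the ansatz (\ref{Ansatz}): writing $\Qvec = q_1\E_1 + q_2\E_2 + q_3\E_3$ is equivalent to $q_4 \equiv q_5 \equiv 0$, which by the definitions of $\E_4,\E_5$ means precisely that $\e_z$ is a pointwise eigenvector of $\Qvec$, with eigenvalue $\sqrt{2/3}\,q_1$; the remaining two eigenvalues are those of the $2\times2$ block acting on $\mathrm{span}\{\e_x,\e_y\}$, namely $-q_1/\sqrt6 \pm \rho/\sqrt2$ with $\rho := \sqrt{q_2^2+q_3^2}$. First I would substitute $q_4=q_5=0$ into (\ref{eq_qi_b}) to obtain the reduced system, the key observation being that $q_2$ and $q_3$ then solve the \emph{same} linear equation $\Delta q_i = \mu\, q_i$, $i=2,3$, with common potential $\mu = t + 6q_1 + 2(q_1^2+q_2^2+q_3^2)$. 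Computing the discriminant of the three eigenvalues shows that the uniaxiality constraint (\ref{tr_uni}) is equivalent, at every point, to $\rho\,(3q_1^2-\rho^2)=0$.

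Next I would invoke elliptic regularity: since the right-hand side of (\ref{E-L}) is polynomial in $\Qvec$, the $q_i$ are real-analytic, and on a connected open set the real-analytic functions form an integral domain. Hence on any connected component $\omega$ of $\{\Qvec\neq0\}$ the identity $(q_2^2+q_3^2)(3q_1^2-q_2^2-q_3^2)^2\equiv 0$ forces exactly one alternative to hold throughout $\omega$: either (I) $q_2\equiv q_3\equiv 0$, in which case $\Qvec=q_1\E_1$ has the constant director $\e_z$ and we are done; or (II) $q_2^2+q_3^2 = 3q_1^2$ identically on $\omega$, with $q_1$ (hence $\rho$) nowhere vanishing on $\omega$, since a common zero would give $\Qvec=0$.

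In case (II) the argument reduces to a gradient identity. On $\omega$ one has $R^2:=q_1^2+q_2^2+q_3^2 = 4q_1^2$, so the reduced equations simplify to $\Delta q_1 = tq_1 + 6q_1^2 + 8q_1^3$ and $\mu = t + 6q_1 + 8q_1^2$. I would then differentiate the constraint: computing $\Delta(q_2^2+q_3^2)$ from $\Delta q_2 = \mu q_2$, $\Delta q_3=\mu q_3$, and $\Delta(3q_1^2)$ from the $q_1$ equation, and using $q_2^2+q_3^2 = 3q_1^2$, all the potential terms (those in $t q_1^2$, $q_1^3$, $q_1^4$) cancel, leaving the clean identity $|\nabla q_2|^2+|\nabla q_3|^2 = 3|\nabla q_1|^2$. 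Writing $(q_2,q_3)=\rho(\cos\phi,\sin\phi)$ and using $\rho=\sqrt3\,|q_1|$, whence $|\nabla\rho|^2 = 3|\nabla q_1|^2$, the polar identity $|\nabla q_2|^2+|\nabla q_3|^2 = |\nabla\rho|^2+\rho^2|\nabla\phi|^2$ collapses to $\rho^2|\nabla\phi|^2=0$. Since $\rho>0$ on $\omega$ we get $\nabla\phi\equiv0$, i.e. $\phi$ is constant on $\omega$; the two in-plane eigendirections of the $2\times2$ block depend only on $\phi$ (they lie at angles $\phi/2$ and $\phi/2+\pi/2$), so the eigenframe is constant on $\omega$, proving the first assertion.

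For the final statement, when $\Omega$ is connected the same dichotomy applies to all of $\Omega$. In case (II) I would fix the constant value $\phi_0$ of $\phi$ on one component of $\{\Qvec\neq 0\}$ and consider the real-analytic function $H:=q_2\sin\phi_0 - q_3\cos\phi_0$; it vanishes on a nonempty open set, hence $H\equiv0$ on $\Omega$ by analyticity. Therefore $\Qvec = q_1\E_1 + \lambda\,(\cos\phi_0\,\E_2+\sin\phi_0\,\E_3)$ for a scalar function $\lambda$, a combination of the two \emph{fixed}, mutually commuting, traceless symmetric matrices $\E_1$ and $\cos\phi_0\E_2+\sin\phi_0\E_3$; consequently all the $\Qvec(\x)$ are simultaneously diagonalizable in a single fixed frame. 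I expect the main obstacle to be the passage from the pointwise constraint $\rho(3q_1^2-\rho^2)=0$ to one branch holding identically — this is exactly where real-analyticity and the gluing of the locally constant phase across the zero set of $q_1$ are essential — together with verifying the cancellation that produces $|\nabla q_2|^2+|\nabla q_3|^2=3|\nabla q_1|^2$.
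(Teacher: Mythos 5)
Your proposal is correct, and in its two key steps it takes a genuinely different route from the paper. For the dichotomy, both arguments start identically: $q_4=q_5=0$ makes $\e_z$ a pointwise eigenvector, and the uniaxiality constraint (\ref{tr_uni}) reduces to $(q_2^2+q_3^2)(3q_1^2-q_2^2-q_3^2)^2=0$; your explicit appeal to real-analyticity and the integral-domain property to show that \emph{one} branch holds throughout a connected component is a point the paper's proof glosses over, and is a genuine improvement in rigor. In the branch $q_2^2+q_3^2=3q_1^2$, the paper re-packages $\Qvec$ into the uniaxial form $s(\n\otimes\n-\frac13\mathbf{I})$ with $s=-\sqrt6\,q_1$ and an in-plane director, observes that $s$ then satisfies $\Delta s=\psi(s)$, and compares with the first equation of the general uniaxial system (\ref{eq_sn}) to force $|\nabla\n|^2 s=0$; you instead stay entirely at the level of the $q_i$-system (\ref{eq_qi_b}), exploit the fact that all three components satisfy $\Delta q_i=\mu q_i$ with the same potential $\mu=t+6q_1+8q_1^2$, and compute the Laplacian of the constraint to get $|\nabla q_2|^2+|\nabla q_3|^2=3|\nabla q_1|^2$, after which polar coordinates kill the phase gradient. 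Your route is self-contained and avoids invoking the derivation of (\ref{eq_sn}) (and the implicit smoothness of the $(s,\n)$ decomposition on the component); the paper's route is shorter given the machinery already built in Section 3.1. One cosmetic caveat: on a non-simply-connected component the angle $\phi$ is only locally defined, but your identity is really the globally defined statement $q_2\nabla q_3-q_3\nabla q_2=\rho^2\nabla\phi=0$, so the unit vector $(q_2,q_3)/\rho$ is constant and the argument stands. For the final assertion, the paper cites analyticity together with the proof of Theorem 4.1(ii) of \cite{lamy2015uniaxial}, whereas your argument via the auxiliary analytic function $H=q_2\sin\phi_0-q_3\cos\phi_0$ (vanishing on an open set, hence identically) and simultaneous diagonalization of the two fixed commuting matrices $\E_1$ and $\cos\phi_0\E_2+\sin\phi_0\E_3$ is explicit and self-contained; it also handles cleanly the possibility that the phase jumps by $\pi$ between components of $\{\Qvec\neq0\}$, since all $\Qvec(\x)$ are diagonalized in the same fixed frame regardless.
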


\begin{rem}
We are considering $\Qvec$-tensors with $q_4 =q_5 = 0$, and show that there are no non-trivial uniaxial solutions of this form with $q_4 = q_5 = 0$.
\end{rem}

\begin{proof}
Let $\Omega_1$ be a connected component of $\{ \Qvec \neq 0 \}$. Since $\Qvec$ is uniaxial and $q_4 =q_5 = 0$, then
\begin{equation}
\tilde{\beta}(\Qvec) = (q_2^2 + q_3^2)(- 3q_1^2 + q_2^2 + q_3^2)^2  = 0,
\end{equation}
which implies that
\begin{equation}
q_2 = q_3 = 0 \quad \text{or} \quad q_1^2 = \frac{1}{3}(q_2^2 + q_3^2).
\end{equation}

If $q_2 = q_3 = 0$ in $\Omega_1$, then $\Qvec = q_1 \E_1$ with a constant eigenframe. 

If $q_1^2 = \frac{1}{3}(q_2^2 + q_3^2)$ in $\Omega_1$, we have
\begin{equation}
\Delta q_i =  ( t + 6 q_1 + 8 q_1^2 ) q_i, \quad i = 1, 2, 3.  
\end{equation}
Since $q_4 = q_5 = 0$, $\Qvec$ can be written as
\begin{equation}\label{Q_2D}
\begin{aligned}
\Qvec & = q_1 \sqrt{\frac{3}{2}}(\mathbf{e}_z \otimes \mathbf{e}_z - \frac{1}{3} \mathbf{I}) + v \sqrt{\frac{1}{2}} (\n \otimes \n - \frac{1}{2} \mathbf{I}_2) \\
  & = q_1 \sqrt{\frac{3}{2}}(\mathbf{e}_z \otimes \mathbf{e}_z - \frac{1}{3} \mathbf{I}) - v \sqrt{\frac{1}{2}} (\p \otimes \p - \frac{1}{2} \mathbf{I}_2), \\
\end{aligned}
\end{equation}
where $\n(\x) \in \mathbb{S}^2$, $\n(\x) \perp \e_z$, $\p(\x) = \e_z \times \n(\x)$, and $\mathbf{I}_2 = \e_x \otimes \e_x + \e_y \otimes \e_y$.

Letting $\n(\x) = a_1 \mathbf{e}_x + a_2 \mathbf{e}_y$, we have
\begin{equation}
q_2 = (a_1^2  - \frac{1}{2}) v, \quad q_3 = a_1 a_2 v.
\end{equation}
Hence,
\begin{equation}
q_1^2 = \frac{1}{3}(q_2^2  + q_3^2) = \frac{1}{3}(a_1^4 + \frac{1}{4} - a_1^2 + a_1^2(1 - a_1^2) ) v^2 = \frac{1}{12} v^2.
\end{equation}

Since $\Qvec \neq 0$ in $\Omega_1$, we have $q_1 \neq 0$ in $\Omega_1$. Hence,
\begin{equation}
v = 2 \sqrt{3} q_1 \quad \text{or} \quad v = -2 \sqrt{3} q_1 \quad  \text{in} ~~ \Omega_1.
\end{equation}
Then from (\ref{Q_2D}), we have
\begin{equation}
\Qvec = \sqrt{\frac{1}{2}}v (\n \otimes \n - \frac{1}{3} \mathbf{I}) \quad \text{or} \quad \Qvec = - \sqrt{\frac{1}{2}}v (\p \otimes \p - \frac{1}{3} \mathbf{I}) \quad \text{in} ~~ \Omega_1,
\end{equation}
which implies that $s = \pm \sqrt{\frac{1}{2}}v = - \sqrt{6} q_1$. Thus $s$ is a solution of 
\begin{equation}
\Delta s = (t - \sqrt{6} s + \frac{4}{3} s^2) s.
\end{equation}
Recalling (\ref{eq_sn}), we have $|\nabla \n|^2 = 0$ or $|\nabla \p|^2 = 0$ in $\Omega_1$. Hence, $\Qvec$ has a constant eigenframe in $\Omega_1$. 

If $\Omega$ is connected, then $\Qvec$ is analytic. Following the proof in Theorem 4.1 (ii) in \cite{lamy2015uniaxial}, we can show that the uniaxial analytic $\Qvec$ has a constant eigenframe in the entire domain $\Omega$.


\end{proof}

\section{Elastic Anisotropic Case}
\label{sec:anisotropic}

Consider the dimensionless LdG free energy with elastic anisotropy
\begin{equation}\label{eq_dimless_AE}
\mathcal{F}[\Qvec] = \int_{\Omega} \frac{t}{2} \tr(\Qvec^2) - \sqrt{6} \tr(\Qvec^3) + \frac{1}{2} (\tr(\Qvec^2))^2
+ \frac{1}{2} |\nabla \Qvec|^2  + \frac{L_2}{2} \Qvec_{ij,j}\Qvec_{ik.k}  \dd \mathbf{x},
\end{equation}
where $L_2 \neq 0$. Then the corresponding Euler-Lagrange equations are
\begin{equation}\label{AE-E-L}
  \begin{aligned}
    &  \Delta \Qvec_{ij} + \dfrac{L_2}{2} \left( \Qvec_{ik,kj} + \Qvec_{jk,ki} - \frac{2}{3} \delta_{ij} \Qvec_{kl, kl} \right) \\
    &    = t \Qvec_{ij} - 3 \sqrt{6} \left( \Qvec_{ik}\Qvec_{kj} - \frac{1}{3} \delta_{ij} \tr(\Qvec^2) \right) + 2 \Qvec_{ij} \tr(\Qvec^2).
  \end{aligned}
\end{equation}

We seek uniaxial solutions of the form (\ref{Q_U}) for the Euler-Lagrange equations (\ref{AE-E-L}). Let
\begin{equation}
  \begin{aligned}
    & V_1 = \mathrm{span} \left\{ \n \odot \n  - \frac{1}{3} \mathbf{I} \right\}, \\
    & V_2 = \mathrm{span} \left\{ \n \odot \vvec ~|~  \vvec \in \n^{\perp} \right\}, \\
    & V_3 = \mathrm{span} \left\{ \vvec \odot \w ~|~ \vvec, \w \in \n^{\perp}, \tr (\vvec \odot \w) = 0 \right\}, \\
    \end{aligned} 
\end{equation}
and  $P_i : \mathcal{S} \rightarrow V_i$ be the corresponding projection operators. Similarly to the elastic isotropic case in section 2, 
the system (\ref{AE-E-L}) can be written as
\begin{equation}\label{AE_P}
  \begin{aligned}
    & P_1 \left( \Delta \Qvec_{ij} + \frac{L_2}{2} \Bigl( \Qvec_{ik,kj} + \Qvec_{jk,ki}  - \frac{2}{3} \delta_{ij} \Qvec_{kl, kl} \Bigr) \right)  \\
    &  ~~~  = t \Qvec_{ij} - 3 \sqrt{6} \Bigl( \Qvec_{ik}\Qvec_{kj} - \frac{1}{3} \delta_{ij} \tr(\Qvec^2) \Bigr) + 2 \Qvec_{ij} \tr(\Qvec^2), \\
    & P_2 \left(\Delta \Qvec_{ij} + \frac{L_2}{2} \Bigl( \Qvec_{ik,kj} + \Qvec_{jk,ki}  - \frac{2}{3} \delta_{ij} \Qvec_{kl, kl} \Bigr) \right) = 0, \\
    & P_3 \left(\Delta \Qvec_{ij} + \frac{L_2}{2} \Bigl( \Qvec_{ik,kj} + \Qvec_{jk,ki}  - \frac{2}{3} \delta_{ij} \Qvec_{kl, kl} \Bigr) \right) = 0.   \\
  \end{aligned}
\end{equation}


Direct calculations show that
\begin{equation}\label{Qikkj}
  \begin{aligned}
    \Qvec_{ik,kj}  & = \left( \n \otimes \n - \frac{1}{3} \mathbf{I} \right) (\nabla^2 s) + (\nabla s \cdot \n) \nabla \n \\
    & + \n \otimes \left( (\nabla \n)^{\mathrm{T}} \nabla s \right) + (\nabla \n) \n \otimes \nabla s + (\nabla \cdot \n) \n \otimes \nabla s \\
    & + s \Bigl( (\nabla^2 \n) \n + \nabla \n \nabla \n + (\nabla \cdot \n) \nabla \n + \n \otimes \nabla (\nabla \cdot \n)  \Bigr), \\
  \end{aligned}
\end{equation}
and 
\begin{equation}
  \begin{aligned}
    \frac{2}{3}  \Qvec_{kl, kl}  & = \frac{2}{3} \biggl( \pp_{kl}^2 s (n_k n_l  - \frac{1}{3} \delta_{kl}) + 2 \nabla s \cdot (\nabla \n) \n + 2 (\nabla \cdot \n) (\nabla s \cdot \n)  \\
    & + s \bigl( (\nabla \cdot \n )^2 + \tr(\nabla \n \nabla \n) + 2  \nabla (\nabla \cdot \n) \cdot \n \bigr) \biggr), \\
  \end{aligned}
\end{equation}
where $ \bigl( \nabla^2 s \bigr)_{ij} = \dfrac{\pp^2 s}{\pp x_i \pp x_j} = s_{ij}$, $ \bigl(\nabla \n \bigr)_{ij} = \dfrac{\pp n_i}{\pp x_j} = n_{i, j} $, $ \bigl( \nabla^2 \n \bigr)_{ijk} = \dfrac{\pp n_i}{\pp x_j \pp x_k} = n_{i, jk} $, $\bigl(\nabla \n \nabla \n \bigr)_{ij} = n_{i, k} n_{k, j}$, 
and $ \bigl( (\nabla^2 \n) \n \bigr)_{ij} = n_{i, jk} n_k$.

It can be noticed that
\begin{equation}
 \begin{aligned}
& (\nabla \n) \n = (\nabla \n) \n - (\nabla \n)^{\mathrm{T}} \n = - \n \times (\nabla \times \n) \in \n^{\perp},  \\
& (\n \otimes \n) \nabla^2 s =  n_i n_k s_{kj} = n_i s_{jk} n_k =  \n \otimes \left( (\nabla^2 s ) \n \right). \\
\end{aligned} 
\end{equation}


For $\forall \vvec \in \mathbb{R}^3$ and $\forall \w \in \n^{\perp}$, we have
\begin{equation}\label{st_nm}
\begin{aligned}
& \mathcal{ST} \left( \n \otimes \n \right) = \n \odot \n - \frac{1}{3} \mathbf{I}, \\
& \mathcal{ST} (\n \otimes \vvec) = (\vvec \cdot \n) \left( \n \odot \n - \frac{1}{3} \mathbf{I} \right) + \n \odot \left(\vvec - (\vvec \cdot \n) \n \right), \\ 
& \mathcal{ST} (\w \otimes \vvec) = \n \odot \left( (\vvec \cdot \n) \w \right) + \mathcal{ST} \left( \w \odot (\vvec - (\vvec \cdot \n) \n )  \right), \\
\end{aligned}
\end{equation}
where $\odot$ denotes the symmetric tensor product $(\n \odot \m)_{ij} =  \frac{1}{2}(n_i m_j + n_j m_i)$, and $\mathcal{ST} \left( \mathbf{A} \right)$ is the symmetric, traceless part of a matrix $\mathbf{A}$, i.e. $\mathcal{ST}(\mathbf{A}) = \frac{1}{2}( \mathbf{A} + \mathbf{A}^{\mathrm{T}}) - \frac{1}{3} \tr( \mathbf{A} ) \mathbf{I}, \quad \forall \mathbf{A} \in \mathbb{R}^{3 \times 3}$. Hence, from (\ref{Qikkj}), we have
\begin{equation}\label{AE_Q_term}
  \begin{aligned}
    & \frac{1}{2} \Bigl( \Qvec_{ik,kj}  + \Qvec_{jk,ki} - \frac{2}{3} \delta_{ij} \Qvec_{kl, kl} \Bigr) = \mathcal{ST} \left( \Qvec_{ik, kj} \right) \\
    & =  \left( (\nabla \cdot \n) (\nabla s \cdot \n) + \nabla s \cdot (\nabla \n) \n + s \nabla (\nabla \cdot \n) \cdot \n + (\nabla^2 s) \n \cdot \n \right) \left(\n \odot  \n - \frac{1}{3} \mathbf{I} \right)\\
    & + \n \odot \Bigl( \left( (\nabla^2 s) \n - ((\nabla^2 s) \n \cdot \n) \n \right) + \left( (\nabla \n)^{\mathrm{T}} \nabla s - (\nabla s \cdot (\nabla \n) \n) \n \right) + (\nabla s \cdot \n)  (\nabla \n) \n  \\
    & ~~~~~~~~~~~ + (\nabla \cdot \n) \left( \nabla s - (\nabla s \cdot \n) \n \right) + s \left( \nabla (\nabla \cdot \n) -  \left( \nabla (\nabla \cdot \n) \cdot \n \right) \n \right) \Bigr) + R(s, \n), \\
  \end{aligned}
\end{equation}
 where
\begin{equation}
  \begin{aligned}
 R(s, \n) & = \mathcal{ST}  \bigl( (\nabla \n) \n \odot (\nabla s - (\nabla s \cdot \n)  \n ) \bigr)  + (\nabla s \cdot \n + s \nabla \cdot \n) \mathcal{ST}(\nabla \n)  \\
      & + s~ \mathcal{ST} \bigl( \nabla \n \nabla \n + (\nabla^2 \n)\n \bigr) - \frac{1}{3} \left( \nabla^2 s - \frac{1}{3} (\Delta s) \mathbf{I} \right). \\
  \end{aligned}
\end{equation}
The detailed calculations leading to (\ref{AE_Q_term}) are given in the Appendix.


Unlike the elastic isotropic case, we are unable to get explicit equations for $s$ and $\n$, as the projections of $R(s, \n)$ depend on $s$ and $\n$.
Moreover, according to (\ref{AE_Q_term}), all the equations in (\ref{AE_P}) involve the second derivatives of $\n$ and $s$. Hence, the uniaxial assumption gives stronger constraints in the elastic anisotropic case compared to the elastic isotropic case.
We consider uniaxial solutions with certain symmetries below.

\begin{proposition}
\label{prop:4.1} If 
\begin{equation}\label{Ansatz}
\Qvec(r, \theta, \varphi) = s(r) \left( \n(\theta, \varphi) \otimes \n(\theta, \varphi) - \frac{1}{3} \mathbf{I} \right)
\end{equation}
is a non-trivial uniaxial solution of (\ref{AE-E-L}),
then 
\begin{equation}
\n(\theta, \varphi) = \frac{\x}{|\x|}
\end{equation}
and $s$ is a solution of 
\begin{equation}\label{AE_eq_h}
  \left( 1 + \frac{2}{3} L_2 \right) \Bigl( s''(r) + \frac{2}{r} s'(r) \Bigr) = \left( 1 + \frac{2}{3} L_2 \right) \frac{6}{r^2} s(r) + \psi(s(r)),  
\end{equation}
where $\psi(s) = ts - \sqrt{6} s^2 + \dfrac{4}{3} s^3$.

\end{proposition}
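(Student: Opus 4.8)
The plan is to exploit the three projection equations (\ref{AE_P}) onto the mutually orthogonal subspaces $V_1,V_2,V_3$, exactly as in the elastic isotropic case, but now carrying the anisotropic term $\mathcal{ST}(\Qvec_{ik,kj})$ computed in (\ref{AE_Q_term}). The decisive simplification comes from the ansatz: since $s=s(r)$ we have $\nabla s = s'(r)\,\e_r$ and the Hessian $\nabla^2 s = s''(r)\,\e_r\otimes\e_r + \frac{s'(r)}{r}(\mathbf{I}-\e_r\otimes\e_r)$, while $\n$ is independent of $r$ so that $\pp_r\n=\mathbf{0}$ and every convective term $(\nabla s\cdot\nabla)\n$ drops out. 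First I would set up the orthonormal frame $\{\n,\m,\p\}$ as before and expand the radial unit vector in this frame, $\e_r = (\e_r\cdot\n)\,\n + (\e_r\cdot\m)\,\m + (\e_r\cdot\p)\,\p$; the whole argument then reduces to showing that the two transverse cosines $\e_r\cdot\m$ and $\e_r\cdot\p$ must vanish.

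Next I would project the full left-hand side $\Delta\Qvec + L_2\,\mathcal{ST}(\Qvec_{ik,kj})$ of (\ref{AE-E-L}) onto $V_1$, $V_2$, $V_3$, taking care to also project the remainder $R(s,\n)$ from (\ref{AE_Q_term}). The key observation is a \emph{radial-scale separation}: the isotropic Laplacian contributions $M_1,M_2,M_3$ and the pieces of $\mathcal{ST}(\Qvec_{ik,kj})$ that carry $s$ through $\nabla^2\n$ or $\nabla\n\nabla\n$ all scale like $s(r)/r^2$ or $s'(r)/r$, whereas the only source of a second radial derivative $s''(r)$ is the Hessian $\nabla^2 s$. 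Since its $\e_r\otimes\e_r$ part contributes to $V_3$ with coefficients built precisely from $\e_r\cdot\m$ and $\e_r\cdot\p$, the $V_3$ equation takes the schematic form
\begin{equation}
s''(r)\Bigl[ a_1\bigl((\e_r\cdot\m)^2 - (\e_r\cdot\p)^2\bigr)(\m\otimes\m - \p\otimes\p) + a_2\,(\e_r\cdot\m)(\e_r\cdot\p)\,\m\odot\p \Bigr] + \frac{s'(r)}{r}\,T_1(\theta,\varphi) + \frac{s(r)}{r^2}\,T_2(\theta,\varphi) = 0,
\end{equation}
with an analogous identity on $V_2$. The functions $s''$, $s'/r$, $s/r^2$ cannot satisfy a nontrivial linear (Euler-type) relation unless $s$ is a pure power of $r$; a power law is incompatible with the inhomogeneous bulk nonlinearity $\psi(s)=ts-\sqrt6 s^2+\frac43 s^3$ forced on $s$ by the $V_1$ equation (its three terms scale as $r^{p},r^{2p},r^{3p}$ and cannot balance $s''\sim r^{p-2}$) for a non-trivial $s$. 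Hence the $s''$-coefficient must vanish identically in the angular variables.

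From the vanishing of the $s''$-coefficient on $V_3$ I would read off $(\e_r\cdot\m)^2=(\e_r\cdot\p)^2$ and $(\e_r\cdot\m)(\e_r\cdot\p)=0$, which together force $\e_r\cdot\m=\e_r\cdot\p=0$. Thus $\e_r=(\e_r\cdot\n)\,\n$, and since both are unit vectors, $\e_r=\pm\n$; using the identification $\n\sim-\n$ in the Landau--de Gennes theory this gives $\n=\x/|\x|$, i.e. $f=\varphi$ and $g=\theta$ modulo an orthogonal transformation. Finally, substituting the radial-hedgehog director $\n=\e_r$ (so that $\nabla\cdot\n=2/r$, $(\nabla\n)\n=\mathbf{0}$ and $|\nabla\n|^2=2/r^2$) into the $V_1$ equation and collecting the isotropic piece $s''+\frac2r s'-\frac6{r^2}s$ together with the $V_1$ projections of $\mathcal{ST}(\Qvec_{ik,kj})$ and of $R(s,\n)$ yields, after simplification, the weighted radial ODE (\ref{AE_eq_h}) with the characteristic factor $1+\frac23 L_2$.

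I expect the main obstacle to be the bookkeeping of the second step: computing the $V_2$ and $V_3$ projections of the remainder $R(s,\n)$ — in particular of $\mathcal{ST}(\nabla\n\nabla\n+(\nabla^2\n)\n)$ and of the radial Hessian — and then rigorously justifying the radial-scale separation that kills the transverse cosines. This is exactly where elastic anisotropy produces extra rigidity: in the isotropic case only the $s/r^2$-scale extra equation (\ref{eq_n}) is available and it does not by itself pin down $\n=\e_r$, whereas the $L_2$-term supplies independent $s''$-scale constraints that do.
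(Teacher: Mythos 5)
Your proposal is correct and is essentially the paper's own argument: you project onto the same spaces $V_1$, $V_2$, $V_3$, observe as the paper does that the only $s''(r)$ contributions come from the Hessian $\nabla^2 s$ (the paper's coefficients $A_4 = -\frac{L_2}{6}\bigl((\m,\e_r)^2 - (\p,\e_r)^2\bigr)$ and $A_5 = -\frac{2L_2}{3}(\m,\e_r)(\p,\e_r)$), and invoke the same incompatibility between the resulting Euler-type linear ODE and the nonlinear $V_1$ equation to force $(\m,\e_r)=(\p,\e_r)=0$, hence $\n = \x/|\x|$. Your ``radial-scale separation'' for general $\n(\theta,\varphi)$ is precisely the paper's closing observation that $\pp_j n_i = O(1/r)$ and $\pp^2_{jk} n_i = O(1/r^2)$, so that $A_1$, $A_4$, $A_5$ are determined by $\nabla^2 s$ alone, and your final substitution of $\n=\e_r$ into the $V_1$ equation producing the factor $1+\frac{2}{3}L_2$ reproduces the paper's derivation of (\ref{AE_eq_h}).
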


\begin{proof}

Let
\begin{equation}
  \begin{aligned}
    & \mathbf{e}_r = \left( \sin \varphi \cos \theta,~ \sin \varphi \sin \theta,~ \cos \varphi \right), \quad \mathbf{e}_{\varphi} = \left( \cos \varphi \cos \theta,~ \cos \varphi \sin \theta,~ - \sin \varphi \right), \\
    &  \mathbf{e}_{\theta} = \left( - \sin \theta,~ \cos \theta,~ 0 \right), \quad \n = \left( \sin f  \cos g, ~ \sin f   \sin g,~ \cos f \right), \\
    &  \m = \left( \cos f \cos g,~ \cos f \sin g,~ - \sin f \right), \quad \p = \left( - \sin g,~ \cos g,~ 0 \right).
  \end{aligned}
\end{equation}

Thus,
\begin{equation}
\begin{aligned}
& V_1 = \mathrm{span} \left\{ \n \odot \n - \frac{1}{3} \mathbf{I} \right\}, \\
& V_2 = \mathrm{span} \left\{ \n \odot \m, ~~  \n \odot \p \right\}, \\
& V_3 = \mathrm{span} \left\{ \m \odot  \m - \p \odot \p, ~~ \m \odot \p \right \}. \\
\end{aligned}
\end{equation}

Then the system (\ref{eq_dimless_AE}) can be written as
\begin{equation}
\begin{aligned}
& K_1 (s, \n) \left( \n \odot \n - \frac{1}{3} \mathbf{I} \right) +  K_2 (s, \n) \left( \n \odot \m \right) + K_3 (s, \n) \left( \n \odot \p \right)  \\
& + K_4 (s, \n) \left( \m \odot  \m - \p \odot \p \right) + K_5 (s, \n) \left( \m \odot \p \right) = 0, \\
\end{aligned}
\end{equation}
which gives us five equations for $s$ and $\n$, i.e. $K_i (s, \n) = 0,  i = 1, \ldots 5$.

For clarity of presentation, we consider the special case for which
\begin{equation}\label{AE_n_sp}
    \n(\theta, \varphi) = (\sin f(\varphi) \cos g (\theta), \sin f(\varphi) \sin g(\theta),  \cos f(\varphi) ). 
\end{equation}

Since $s = s(r)$, we have
\begin{equation}
 \nabla s = \pp_rs ~ \e_r,  \quad  \nabla^2 s = \pp_r^2 s ~ \e_r \otimes \e_r + \frac{1}{r} \Bigl( \pp_r s ~ (\e_{\varphi} \otimes \e_{\varphi} + \e_{\theta} \otimes \e_{\theta}) \Bigr),
\end{equation}
and
\begin{equation}\label{dds}
\nabla^2 s  - \frac{1}{3}(\Delta s) \mathbf{I} = (\pp_r^2 s - \frac{1}{r} \pp_r s) \Bigl( \e_r \otimes \e_r - \frac{1}{3} \mathbf{I} \Bigr). 
\end{equation}

For $\n$ of the form (\ref{AE_n_sp}), direct calculations show that
\begin{equation}\label{Cal_sn1}
\begin{aligned}
& \nabla \cdot \n  = \frac{1}{r} \left( (\m, \e_{\varphi}) \pp_{\varphi} f + \frac{\sin f}{\sin \varphi} (\p, \e _{\theta}) \pp_{\theta} g \right) \triangleq \frac{1}{r} D(\theta, \varphi), \\
& \nabla \n = \frac{1}{r} \left(  \partial_{\varphi} f ~ \m \otimes \e_{\varphi} + \frac{\sin f}{\sin \varphi}   \partial_{\theta} g ~ \p \otimes \e_{\theta} \right),  \\
& (\nabla \n) \n  = \frac{1}{r} \left( \pp_{\varphi} f (\n, \e_{\varphi}) \m + \frac{\sin f}{ \sin \varphi} \pp_{\theta} g  (\n, \e_{\theta}) \p \right), \\
& \nabla s - \left( \nabla s \cdot \n \right) \n  =  \pp_r s \Bigl( (\m, \e_r) \m + (\p, \e_r) \p \Bigr), \\ 
\end{aligned}
\end{equation}
where $(\cdot, \cdot)$ is the inner product in $\mathbb{R}^3$.

Hence,
\begin{equation}\label{Cal_sn2}
  \begin{aligned}
 &  (\nabla \cdot \n)(\nabla s \cdot \n) + \nabla s \cdot (\nabla \n)\n  \\
    & ~~~~ = \left( \Bigl((\m, \e_{\varphi})(\n, \e_r) + (\m, \e_r)(\n, \e_{\varphi}) \Bigr) \pp_{\varphi} f + \frac{\sin f}{\sin \varphi} \Bigl( (\p, \e_{\theta})(\n, \e_r) + (\p, \e_r)(\n, \e_{\theta})\Bigr)  \pp_{\theta} g \right) \dfrac{1}{r} \pp_r s, \\
& (\nabla^2 s) \n \cdot \n = (\n, \e_r)^2 \pp_r^2 s + \left( (\n, \e_{\varphi})^2 + (\n, \e_{\theta})^2 \right)\frac{1}{r} \pp_r s = (\n, \e_r)^2 \pp_r^2 s + \left(1 - (\n, \e_{r})^2 \right)\frac{1}{r} \pp_r s, \\
\end{aligned}
\end{equation}
and
\begin{equation}\label{Cal_sn3}
  \begin{aligned}
    & \nabla (\nabla \cdot \n ) = \pp_r(\nabla \cdot \n) \e_r + \dfrac{1}{r} \pp_{\varphi} (\nabla \cdot \n) \e_{\varphi} + \dfrac{1}{r \sin \varphi} \pp_{\theta} (\nabla \cdot \n) \e_{\theta} \\
    & ~~~~~~~~~~~ = \dfrac{1}{r^2} \left( -  D(\theta, \varphi) \e_r + \pp_{\varphi}  D(\theta, \varphi) \e_{\varphi} + \dfrac{1}{\sin \varphi}  \pp_{\theta} D(\theta, \varphi) \e_{\theta}    \right), \\
    & \nabla \n \nabla \n = \frac{1}{r^2} \biggl( (\pp_{\varphi} f)^2 (\m, \e_{\varphi}) \m \otimes \e_{\varphi}  \\
& ~~~~~~~~~~~~~~~~~~~ + \frac{\sin^2 f}{\sin^2 \varphi} (\pp_{\theta} g)^2  (\p, \e_{\theta}) \p \otimes \e_{\theta}  + \frac{\sin f}{\sin \varphi} \pp_{\varphi} f \pp_{\theta} g \Bigl( (\p, \e_{\varphi})\m \otimes \e_{\theta} + (\m, \e_{\theta}) \p \otimes \e_{\varphi} \Bigr) \biggr), \\
    &  \mathcal{S}(\nabla \n ) =  \frac{1}{r} \biggl( \pp_{\varphi} f (\m, \e_{\varphi}) \m \odot \m  + \frac{\sin f}{\sin \varphi} \pp_{\theta} g  (\p, \e_{\theta}) \p \odot \p + \Bigl( \pp_{\varphi} f (\p, \e_{\varphi}) +  \frac{\sin f}{\sin \varphi} \pp_{\theta} g  (\m, \e_{\theta}) \Bigr) \p \odot \m \biggr) \\
& \qquad ~~~~~ + \n \odot  \frac{1}{r} \left( \pp_{\varphi} f (\m, \e_{\varphi}) \m + \dfrac{\sin f}{\sin \varphi} \pp_{\theta} g  (\n, \e_{\theta}) \p \right),  \\ 
 \end{aligned}
\end{equation}
 where $\mathcal{S} \left( \mathbf{A} \right)$ is the symmetric part of a matrix $\mathbf{A}$, i.e. $\mathcal{S}(\mathbf{A}) = \frac{1}{2}( \mathbf{A} + \mathbf{A}^{\mathrm{T}}), \quad \forall \mathbf{A} \in \mathbb{R}^{3 \times 3}$.

Next, we compute $\mathcal{S} ((\nabla^2 \n) \n)$. Since
\begin{equation}
(\nabla^2 \n) \n  = (\n, \e_r)  \pp_r(\nabla \n) + \frac{1}{r} (\n, \e_{\varphi})  \pp_{\varphi}(\nabla \n) + \frac{1}{r \sin \varphi}(\n, \e_{\theta})  \pp_{\theta}(\nabla \n),
\end{equation}
where
\begin{equation}
\begin{aligned}
& \pp_r (\nabla \n) = - \frac{1}{r^2} \left( \partial_{\varphi} f ~ \m \otimes \e_{\varphi} + \frac{\sin f}{\sin \varphi}   \partial_{\theta} g ~ \p \otimes \e_{\theta} \right), \\
& \pp_{\varphi} (\nabla \n)  
    = \frac{1}{r} \left( \partial_{\varphi}^2 f ~ \m \otimes \e_{\varphi} +  \pp_{\varphi} \left(\frac{\sin f}{\sin \varphi}\right) \partial_{\theta} g ~ \p \otimes \e_{\theta} - (\pp_{\varphi} f)^2 \n \otimes \e_{\varphi} - \pp_{\varphi} f ~ \m \otimes \e_r  \right), \\
    & \pp_{\theta} (\nabla \n) 
    =  \frac{1}{r} \Bigl( \pp_{\varphi} f ( \cos f \pp_{\theta} g~ \p \otimes \e_{\varphi} + \cos \varphi ~ \m \otimes \e_{\theta}) \\
    & + \frac{\sin f}{\sin \varphi} \left( \pp_{\theta}^2 g ~ \p \otimes \e_{\theta} -  (\pp_{\theta} g)^2~  (\sin f ~ \n + \cos f ~ \m) \otimes \e_{\theta} -  \pp_{\theta} g~  \p \otimes (\sin \varphi ~ \e_r + \cos \varphi~ \e_{\varphi}) \right) \Bigr). \\
    \end{aligned}
\end{equation}
We have
\begin{equation}\label{Cal_sn4}
  \begin{aligned}
    \mathcal{S} ((\nabla^2 \n)) \n  & = \frac{1}{r^2} \Biggl( \left( (\n, \e_{\varphi}) \pp_{\varphi}^2 f - (\n, \e_r) \pp_{\varphi} f \right) \m \odot \e_{\varphi} \\
                                    & \qquad + \frac{1}{\sin \varphi} (\n, \e_{\theta}) \left(\cos \varphi \pp_{\varphi} f - \frac{\sin f}{\sin \varphi} \cos f (\pp_{\theta} g)^2 \right) \m \odot \e_{\theta} \\
    & \qquad + \left( \frac{\sin f}{\sin^2 \varphi} (\n, \e_{\theta}) \pp_{\theta}^2 g + \pp_{\varphi}(\frac{\sin f}{\sin \varphi}) (\n, \e_{\varphi}) \pp_{\theta} g - \frac{\sin f}{\sin \varphi} (\n, \e_r) \pp_{\theta} g ) \right) \p \odot \e _{\theta} \\
    & \qquad + \frac{1}{ \sin \varphi} (\n, \e_{\theta}) \left( \cos f \pp_{\varphi} f \pp_{\theta} g - \frac{\sin f}{\sin \varphi} \cos \varphi \pp_{\theta} g \right) \p \odot \e_{\varphi} \\
    & \qquad - (\n, \e_{\varphi}) (\pp_{\varphi} f)^2 \n \odot \e_{\varphi} - \frac{1}{r^2} (\n, \e_{\varphi}) \pp_{\varphi} f ~ \m \odot \e_r \\
    & \qquad - \frac{\sin f}{\sin^2 \varphi} (\n, \e_{\theta}) \sin f (\pp_{\theta}g)^2~ \n \odot \e_{\theta} - \frac{\sin f}{\sin^2 \varphi} (\n, \e_{\theta}) \sin \varphi \pp_{\theta} g ~ \p \odot \e_r \Biggr).
    \end{aligned}
\end{equation}

In order to get $K_1(s, \n)$, we need to project $R(s, \n)$ into $V_1$. Note
\begin{equation}
\mathcal{ST} \left( \mu_1 (\m \odot \m) + \mu_2 (\p \odot \p) \right) =  \dfrac{\mu_1 - \mu_2}{2} \left( \m \odot \m - \p \odot \p \right) - \dfrac{\mu_1 + \mu_2}{2} \left( \n \odot \n - \dfrac{1}{3} \mathbf{I} \right)
\end{equation}
for $\forall \mu_1, \mu_2 \in \mathbb{R}$. Hence,
\begin{equation}
\begin{aligned}
& P_1 \left( \nabla^2 s - \frac{1}{3} (\Delta s) \mathbf{I} \right) = \left( \frac{3}{2} (\n, \e_r)^2 - \frac{1}{2} \right) \left( \pp_r^2 s - \frac{1}{r} \pp_r s \right)  \Bigl( \n \odot \n - \frac{1}{3} \mathbf{I} \Bigr), \\
&  P_1 \Bigl( \mathcal{ST}  \bigl( (\nabla \n) \n \odot (\nabla s - (\nabla s \cdot \n)  \n) \bigr)  + (\nabla s \cdot \n) \mathcal{ST}(\nabla \n) \Bigr)
=  B_0 (\theta, \varphi) \frac{1}{r} \pp_r s(r) \Bigl( \n \odot \n - \frac{1}{3} \mathbf{I} \Bigr), \\
& P_1  \Bigl( \left( \nabla \cdot \n \right) ~ \mathcal{ST} (\nabla \n) + \mathcal{ST}\left( \nabla \n \nabla \n + (\nabla^2 \n)\n \right) \Bigr) =  \frac{1}{r^2}C_0(\theta, \varphi) \Bigl( \n \odot \n - \frac{1}{3} \mathbf{I} \Bigr), \\
\end{aligned}
\end{equation}
where $B_0 (\theta, \varphi), C_0(\theta, \varphi)$ depend on $f$ and $g$, which can be calculated from (\ref{Cal_sn3}), (\ref{Cal_sn4}). 
One can show that
\begin{equation}
B_0(\theta, \varphi) = - \dfrac{1}{2}\left( \Bigl( (\n, \e_{\varphi})(\m, \e_r) + (\m, \e_{\varphi})(\n, \e_r) \Bigr) \pp_{\varphi} f  + \frac{\sin f}{\sin \varphi} \Bigl( (\n, \e_{\theta})(\p, \e_r) + (\p, \e_{\theta})(\n, \e_r)  \Bigr) \pp_{\theta} g \right).
\end{equation}
The expression of $C_0(\theta, \varphi)$ is rather complicated and does not play any role in our proof.

The above calculations imply that $K_1 (s, \n) = 0$ is equivalent to
\begin{equation}\label{AE_P1_s}
A_1(\theta, \varphi) s''(r) + B_1(\theta, \varphi) \frac{1}{r} s'(r) + C_1(\theta, \varphi) \frac{1}{r^2} s(r) = \psi (s),
\end{equation}
where
\begin{equation}\label{AE_P1_AB}
\begin{aligned}
& A_1(\theta, \varphi) s''(r) + B_1(\theta, \varphi) \frac{1}{r} s'(r) =  \Delta s + L_2 \left( (\nabla \cdot \n) (\nabla s \cdot \n) + \nabla s \cdot (\nabla \n) \n + (\nabla^2 s) \n \cdot \n \right)  \\
&  ~~ + L_2 B_0 (\theta, \varphi)  \frac{1}{r} s'(r) - L_2 \left( \dfrac{1}{2} (\n, \e_r)^2 - \dfrac{1}{6} \right) \left( s''(r) - \frac{1}{r} s'(r) \right),
\end{aligned}
\end{equation}
and
\begin{equation}
 \frac{1}{r^2}C_1(\theta, \varphi) =  L_2 \left( \frac{1}{r^2}C_0(\theta, \varphi) + \nabla (\nabla \cdot \n) \cdot \n \right) - 3 |\nabla \n|^2.
\end{equation}

From (\ref{Cal_sn2}), we have
\begin{equation}
\begin{aligned}
& (\nabla \cdot \n) (\nabla s \cdot \n) + \nabla s \cdot (\nabla \n) \n + (\nabla^2 s) \n \cdot \n  = (\n, \e_r)^2 s''(r) \\
& ~~+ \biggl(\Bigl( (\n, \e_{\varphi})(\m, \e_r) + (\m, \e_{\varphi})(\n, \e_r) \Bigr) \pp_{\varphi} f + \frac{\sin f}{\sin \varphi} \Bigl( (\n, \e_{\theta})(\p, \e_r) + (\p, \e_{\theta})(\n, \e_r) \Bigr)  \pp_{\theta} g \\
& \quad \quad \quad \quad + 1 - (\n, \e_r)^2  \biggr)\dfrac{1}{r} s'(r). \\
\end{aligned}
\end{equation}
Hence, (\ref{AE_P1_AB}) implies that 
\begin{equation}\label{AE_A1_value}
\begin{aligned}
 A_1 (\theta, \varphi) & = 1 + L_2 \left( (\n, \e_r)^2 - \left(  \dfrac{1}{2} (\n, \e_r)^2 - \dfrac{1}{6}  \right) \right) \\
& = 1 + L_2 \left( \frac{1}{2} (\n, \e_r)^2 + \frac{1}{6} \right) \neq 0, 
\end{aligned}
\end{equation}
and
\begin{equation}
\begin{aligned}
  & B_1(\theta, \varphi)  = 2 + L_2 \biggl( \frac{5}{6} - \frac{1}{2} (\n, \e_r)^2 + \frac{1}{2} \Bigl( (\n, \e_{\varphi})(\m, \e_r) + (\m, \e_{\varphi})(\n, \e_r) \Bigr) \pp_{\varphi} f  \\
  & ~~~~~~~~~~~~~~~~~~~~~~~~~  + \frac{1}{2} \frac{\sin f}{\sin \varphi} \Bigl( (\n, \e_{\theta})(\p, \e_r) + (\p, \e_{\theta})(\n, \e_r) \Bigr)  \pp_{\theta} g  \biggr). \\
\end{aligned}
\end{equation}


Similarly, from (\ref{AE_Q_term}), one can show that
\begin{equation}\label{AE_extra}
\begin{aligned}
  & P_3 \left(\Delta \Qvec_{ij} + \frac{L_2}{2} \Bigl( \Qvec_{ik,kj} + \Qvec_{jk,ki}  - \frac{2}{3} \delta_{ij} \Qvec_{kl, kl} \Bigr) \right) = P_3\Bigl( \Delta \Qvec_{ij} + L_2 R(s, \n) \Bigr) \\
  & \qquad \qquad = s''(r) \mathbf{A} (\theta, \varphi)  +  \frac{1}{r}s'(r) \mathbf{B} (\theta, \varphi) +  \frac{1}{r^2}s(r) \mathbf{C} (\theta, \varphi) = 0, \\
  \end{aligned}
\end{equation}
where
\begin{equation}\label{AE_Eaxta_Coe_AB}
\begin{aligned}
& s''(r) \mathbf{A} (\theta, \varphi)  +  \frac{1}{r}s'(r) \mathbf{B} (\theta, \varphi) \\
&  = L_2 P_3 \left( \mathcal{ST} \bigl( (\nabla \n) \n \odot (\nabla s - (\nabla s \cdot \n) \n ) \bigr) + (\nabla s \cdot \n) \mathcal{ST}(\nabla \n) - \dfrac{1}{3} \Bigl( \nabla^2 s - \dfrac{1}{3} (\Delta s) \mathbf{I} \Bigr)  \right) \\
&  = \left( A_4(\theta, \varphi)s''(r) + B_4(\theta, \varphi) \dfrac{1}{r} s'(r)  \right) \left( \m \odot \m - \p \odot \p \right)  + \left( A_5(\theta, \varphi)s''(r) + B_5(\theta, \varphi) \dfrac{1}{r} s'(r)  \right) \m \odot \p, \\
\end{aligned}
\end{equation}
and
\begin{equation}\label{AE_Eaxta_Coe_C}
\begin{aligned}
 \frac{1}{r^2} \mathbf{C} (\theta, \varphi)  & = \left( 2 \sum_{k=1}^3 \pp_k \n \otimes \pp_k \n  - |\nabla \n|^2( \mathbf{I} - \n \otimes \n) \right) + L_2  \biggl( P_3 \Bigl(  \mathcal{ST} \bigl( (\nabla \cdot \n)\nabla \n + \nabla \n \nabla \n + (\nabla^2 \n)\n \bigr) \Bigr) \biggr), \\ 
 & =  \frac{1}{r^2} \Bigl( C_4(\theta, \varphi) \left( \m \odot \m - \p \odot \p \right)  + C_5(\theta, \varphi)\m \odot \p \Bigr). \\ 
\end{aligned}
\end{equation}
Hence, 
\begin{equation}\label{AE_Extra_2}
K_i (s, \n) = 0 ~~\Longleftrightarrow~~ A_i(\theta, \varphi) s''(r) + B_i(\theta, \varphi) \frac{1}{r} s'(r) + C_i(\theta, \varphi) \frac{1}{r^2} s(r) = 0, \quad i = 4, 5.
\end{equation}

From (\ref{AE_Eaxta_Coe_AB}) and (\ref{dds}), we have
\begin{equation}
\begin{aligned}
 \mathbf{A} (\theta, \varphi) &  = - \frac{L_2}{3} P_3 \Bigl( \e_r \otimes \e_r - \frac{1}{3} \mathbf{I} \Bigr) \\
                              & = - \frac{L_2}{3} \left( \frac{1}{2} \left( (\m, \e_r)^2 - (\p, \e_r)^2 \right)     \left( \m \odot \m - \p \odot \p \right)  + 2 (\m, \e_r) (\p, \e_r)\m \odot \p \right). \\
\end{aligned}
\end{equation}
Hence,
\begin{equation}\label{Extra_A45}
A_4(\theta, \varphi) = -\dfrac{L_2}{6} \left( (\m, \e_r)^2 - (\p, \e_r)^2 \right), \quad A_5(\theta, \varphi) = - \dfrac{2L_2}{3} (\m, \e_r)(\p, \e_r).
\end{equation}


The two equations in (\ref{AE_Extra_2}) can be viewed as two linear ordinary differential equations for $s(r)$. 
If $\exists k \in \{ 4, 5 \}$, s.t. $A_k(\theta, \varphi) \neq 0$, we can obtain $s(r)$ by solving the equation in (\ref{AE_Extra_2}) with $A_k \neq 0$, which cannot be a solution of (\ref{AE_P1_s}).
Indeed, solutions of the equation in (\ref{AE_Extra_2}) with $A_k \neq 0$, are of the form
\begin{equation}\label{sol_extra}
 \gamma_1 r^{\alpha_1} + \gamma_2 r^{\alpha_2}, ~~~\text{or}~~~  (\gamma_1 + \gamma_2 \ln r) r^{\alpha_1}, ~~~\text{or} ~~~ r^{\alpha_1}\left( \gamma_1 \cos(\alpha_2 \ln r) + \gamma_2 \sin(\alpha_2 \ln r) \right),
\end{equation} 
depending on $A_k$, $B_k$, and $C_k$ \cite{ODEBook}. However, the solutions in (\ref{sol_extra}) cannot be solutions of (\ref{AE_P1_s}). 


So $A_4(\theta, \varphi) = A_5(\theta, \varphi) = 0$, which implies that $(\m, \e_r) = (\p, \e_r) = 0$. Since $\n$, $\m$ and $\p$ are pairwise orthogonal, we have $\n = \e_r =\dfrac{\x}{|\x|}$.

For $\n = \dfrac{\x}{|\x|}$, direct calculations show that
\begin{equation}
P_i \left( \Delta \Qvec_{ij} + \frac{L_2}{2} \Bigl( \Qvec_{ik,kj} + \Qvec_{jk,ki}  - \frac{2}{3} \delta_{ij} \Qvec_{kl, kl} \Bigr) \right) = 0,~ i = 2, 3,
\end{equation} 
and $s$ is a solution of
\begin{equation}
  \left( 1 + \frac{2}{3} L_2 \right) \Bigl( s''(r) + \frac{2}{r} s'(r) \Bigr) = \left( 1 + \frac{2}{3} L_2 \right) \frac{6}{r^2} s(r) + \psi(s(r)),
\end{equation}
where $\psi(s) = ts - \sqrt{6} s^2 + \dfrac{4}{3} s^3$.

For a general $\n(\theta, \varphi)$, we note that $\n = \n(\theta, \varphi)$ implies that
\begin{equation}
\dfrac{\pp n_i}{\pp x_j} = O \left( \frac{1}{r} \right), \quad \dfrac{\pp^2 n_i }{\pp x_j \pp x_k} = O \left( \dfrac{1}{r^2} \right).
\end{equation}
Hence, as in the special case, 
\begin{equation}\label{eq_K1}
K_1 (s, \n) = 0 ~~\Longleftrightarrow~~ A_1(\theta, \varphi) s''(r) + B_1(\theta, \varphi) \frac{1}{r} s'(r) + C_1(\theta, \varphi) \frac{1}{r^2} s(r) = \psi (s(r)), 
\end{equation}
and
\begin{equation}\label{eq_Ki}
K_i (s, \n) = 0 ~~\Longleftrightarrow~~ A_i(\theta, \varphi) s''(r) + B_i(\theta, \varphi) \frac{1}{r} s'(r) + C_i(\theta, \varphi) \frac{1}{r^2} s(r) = 0, \quad i = 2, 3, 4, 5.
\end{equation}

We can conclude the proof by noting that (\ref{AE_A1_value}) and (\ref{Extra_A45})  always hold, as $A_1$, $A_4$ and $A_5$ are all determined by $\nabla^2 s$.
Hence, if 
\begin{equation}
\Qvec(r, \theta, \varphi) = s(r) \left( \n(\theta, \varphi) \otimes \n (\theta, \varphi) - \frac{1}{3} \mathrm{I} \right)
\end{equation}  
is a non-trivial uniaxial solution of (\ref{eq_dimless_AE}), then $\n = \dfrac{\x}{|\x|}$ and $s$ is a solution of
\begin{equation}
\left( 1 + \frac{2}{3} L_2 \right) \Bigl( s''(r) + \frac{2}{r} s'(r) \Bigr) = \left( 1 + \frac{2}{3} L_2 \right)\frac{6}{r^2} s(r) + \psi(s(r)),
\end{equation}
where $\psi(s) = ts - \sqrt{6} s^2 + \dfrac{4}{3} s^3$.

\end{proof}

\section{Conclusions}\label{sec:conclusions}

We study uniaxial solutions for the Euler-Lagrange equations in the LdG framework, to some extent building on the results in \cite{lamy2015uniaxial}. There is existing work on the uniaxial/biaxial character of LdG equilibria, they rely on energy comparison arguments and the fact that biaxiality is preferred at low temperatures, to the uniaxial phase, or that biaxiality arises from geometrical considerations. We purely use the structure of the Euler-Lagrange equations (as in \cite{lamy2015uniaxial}) in this framework, and our results therefore apply to all critical points and not merely minimizers.

For a 3D problem, a uniaxial LdG $\Qvec$-tensor has three degrees of freedom whereas a fully biaxial tensor has five degrees of freedom. By using spherical angles to represent the unit vector $\n = (\sin f \cos g, \sin f \sin g, \cos f)$, we derive a system of partial differential equations for $f$, $g$ and the scalar order parameter, $s$. We believe that this representation of uniaxial solutions will aid further work in this direction.

In the elastic isotropic case, under the assumption that  $f = f(\varphi)$ and $g = g(\theta)$, we show that the only possible uniaxial solutions are $f(\varphi) = \pm \varphi$, $g(\theta) = \pm \theta + C$, and $s = s(r)$ satisfies a second order ordinary differential equation.
In other words, they are radial-hedgehog solutions modulo an orthogonal transformation. 
By using an orthonormal basis for the space of symmetric and traceless tensors, we can show that if $\e_z$ is a eigenvector of $\Qvec$, 
then $\Qvec$ necessarily has a constant eigenframe. 

In the elastic anisotropic case, we can show the radial-hedgehog is the only possible uniaxial solution under the assumption that $s = s(r)$, $f = f(\theta, \varphi)$ and $g = g(\theta, \varphi)$. Although a complete description of 3D uniaxial solutions is still missing, we believe the radial-hedgehog is the only nontrivial uniaxial solution, at least in the elastic anisotropic case. Further, we consider model problems in this paper but these model problems are physically relevant, e.g. it is reasonable to expect that the uniaxial director is independent of $r$ for spherically symmetric geometries or that $\mathbf{e}_z$ is a fixed eigenvector for severely confined systems, with $\mathbf{e}_z$ normal to the bounding plates. The formulation of the uniaxial problem in terms of $s$, $f$ and $g$ will be useful for a completely general study of admissible uniaxial solutions of the LdG Euler-Lagrange equations without any constraints.

Pure uniaxiality appears to be a strong constraint but it is known that for several model situations, (see e.g. \cite{amaz, henaomajumdarpisante2017}), minimizers are approximately uniaxial almost everywhere. Therefore, it would be interesting and highly instructive to construct ``explicit'' approximately uniaxial solutions. Our technical computations in the elastic isotropic and anisotropic case may aid such constructions and equally, similar techniques may help in classifying solutions (without the constraint of uniaxiality) of the LdG Euler-Lagrange equations.

\section*{Acknowlegements}

A.M.'s research is supported by an EPSRC Career Acceleration Fellowship EP/J001686/1 and EP/J001686/2, an OCIAM Visiting Fellowship and the Advanced Studies Centre at Keble College.
Part of this work was carried out when Y.W. was visiting the University of Bath, he would like to thank the University of Bath and Keble College for their hospitality.
He also would like to thank the National Natural Science Foundation of China for financial support (grant No. 11421101) and his Ph.D. advisor Pingwen Zhang, for his constant support and helpful advice.
\numberwithin{equation}{section}


\numberwithin{equation}{section}

\begin{appendices}

\section{Calculations of Eq. (\ref{AE_Q_term})}
In order to get (\ref{AE_Q_term}), we compute the symmetric, traceless part of each term in (\ref{Qikkj}), the first step of which is eq. (\ref{st_nm}). Note that
\begin{equation}
 ( (\nabla \n)^{\mathrm{T}} \nabla s ) \cdot \n =  (\nabla s)^{\mathrm{T}} (\nabla \n) \n = \nabla s \cdot (\nabla \n) \n.
\end{equation}

The direct calculations show that
\begin{equation}
\begin{aligned}
  & \mathcal{ST} \left( \left( \n \otimes \n - \frac{1}{3} \mathbf{I} \right) (\nabla^2 s) \right) = \mathcal{ST} \left( \n \otimes (\nabla^2 s)\n \right) - \mathcal{ST} \left( \dfrac{1}{3} \nabla^2 s \right)\\ 
  & \quad =  \Bigl(  (\nabla^2 s) \n \cdot \n \Bigr)  \left( \n \odot \n - \frac{1}{3} \mathbf{I} \right)  + \n \odot \Bigl(  (\nabla^2 s) \n - \left( (\nabla^2 s) \n \cdot \n \right) \n \Bigr) - \frac{1}{3} \left( \nabla^2 s - \frac{1}{3} (\Delta s) \mathbf{I} \right), \\
& \mathcal{ST} \Bigl( (\nabla s \cdot \n) \nabla \n \Bigr) = (\nabla s \cdot \n) \mathcal{ST} \Bigl( \nabla \n \Bigr), \\
& \mathcal{ST} \Bigl( \n \otimes \left( (\nabla \n)^{\mathrm{T}} \nabla s  \right) \Bigr) =  \left( ( (\nabla \n)^{\mathrm{T}} \nabla s ) \cdot \n \right) \left( \n \odot \n - \frac{1}{3} \mathbf{I} \right) + \n \odot \Bigl( (\nabla \n)^{\mathrm{T}} \nabla s - (  ( (\nabla \n)^{\mathrm{T}} \nabla s ) \cdot \n) \n \Bigr) \\
& \quad  = \left( \nabla s \cdot (\nabla \n) \n \right) \left( \n \odot \n - \frac{1}{3} \mathbf{I} \right) + \n \odot \Bigl( (\nabla \n)^{\mathrm{T}} \nabla s - (\nabla s \cdot (\nabla \n) \n) \n \Bigr), \\
& \mathcal{ST} \Bigl( (\nabla \n) \n \otimes \nabla s \Bigr) = (\nabla \n) \n \odot (\nabla s \cdot \n) \n  + \mathcal{ST} \Bigl( (\nabla \n) \n \odot ( \nabla s - (\nabla s \cdot \n) \n ) \Bigr) \\
   & \quad = \n \odot \Bigl( (\nabla s \cdot \n) (\nabla \n) \n \Bigr)  + \mathcal{ST} \Bigl( (\nabla \n) \n \odot ( \nabla s - (\nabla s \cdot \n) \n ) \Bigr), \\
& \mathcal{ST} \Bigl( (\nabla \cdot \n) \n \otimes \nabla s \Bigr) =  (\nabla \cdot \n) \mathcal{ST} \Bigl( \n \otimes \nabla s  \Bigr) \\
& \quad = (\nabla \cdot \n) (\nabla s \cdot \n) \left( \n \odot \n - \frac{1}{3} \mathbf{I} \right) + \n \odot \Bigl( (\nabla \cdot \n) \left( \nabla s - (\nabla s \cdot \n)\n \right) \Bigr), \\
 & \mathcal{ST} \left( s \Bigl( (\nabla^2 \n) \n + \nabla \n \nabla \n + (\nabla \cdot \n) \nabla \n \Bigr) \right) = s (\nabla \cdot \n) \mathcal{ST} (\nabla \n) + s ~ \mathcal{ST} \left( (\nabla^2 \n) \n + \nabla \n \nabla \n  \right), \\
& \mathcal{ST} \Bigl( s ~ \n \otimes \nabla(\nabla \cdot \n) \Bigr) 
= \left( s \nabla(\nabla \cdot \n) \cdot \n \right) \left( \n \odot \n - \frac{1}{3} \mathbf{I}   \right) + \n \odot \Bigl( s \left( \nabla(\nabla \cdot \n) - \left( \nabla(\nabla \cdot \n) \cdot \n \right) \n \right)  \Bigr). \\
\end{aligned}
\end{equation}

Hence, we have
\begin{equation}
  \begin{aligned}
    & \frac{1}{2} \Bigl( \Qvec_{ik,kj}  + \Qvec_{jk,ki} - \frac{2}{3} \delta_{ij} \Qvec_{kl, kl} \Bigr) = \mathcal{ST} \left( \Qvec_{ik, kj} \right) \\
    & =  \left( (\nabla \cdot \n) (\nabla s \cdot \n) + \nabla s \cdot (\nabla \n) \n + s \nabla (\nabla \cdot \n) \cdot \n + (\nabla^2 s) \n \cdot \n \right) \left(\n \odot  \n - \frac{1}{3} \mathbf{I} \right)\\
    & + \n \odot \Bigl( \left( (\nabla^2 s) \n - ((\nabla^2 s) \n \cdot \n) \n \right) + \left( (\nabla \n)^{\mathrm{T}} \nabla s - (\nabla s \cdot (\nabla \n) \n) \n \right) + (\nabla s \cdot \n)  (\nabla \n) \n  \\
    & ~~~~~~~~~~~ + (\nabla \cdot \n) \left( \nabla s - (\nabla s \cdot \n) \n \right) + s \left( \nabla (\nabla \cdot \n) -  \left( \nabla (\nabla \cdot \n) \cdot \n \right) \n \right) \Bigr) + R(s, \n), \\
  \end{aligned}
\end{equation}
 where
\begin{equation}
  \begin{aligned}
 R(s, \n) & = \mathcal{ST}  \bigl( (\nabla \n) \n \odot (\nabla s - (\nabla s \cdot \n)  \n ) \bigr)  + (\nabla s \cdot \n + s \nabla \cdot \n) \mathcal{ST}(\nabla \n)  \\
      & + s~ \mathcal{ST} \bigl( \nabla \n \nabla \n + (\nabla^2 \n)\n \bigr) - \frac{1}{3} \left( \nabla^2 s - \frac{1}{3} (\Delta s) \mathbf{I} \right). \\
  \end{aligned}
\end{equation}

Note
\begin{equation}
\begin{aligned}
J(s, \n) & \triangleq   \left( (\nabla^2 s) \n - ((\nabla^2 s) \n \cdot \n) \n \right) + \left( (\nabla \n)^{\mathrm{T}} \nabla s - (\nabla s \cdot (\nabla \n) \n) \n \right) + (\nabla s \cdot \n)  (\nabla \n) \n  \\
    & + (\nabla \cdot \n) \left( \nabla s - (\nabla s \cdot \n) \n \right) + s \left( \nabla (\nabla \cdot \n) -  \left( \nabla (\nabla \cdot \n) \cdot \n \right) \n \right) \in \n^{\perp}, \\
\end{aligned}
\end{equation}
so $\n \odot J(s, \n)  \in V_2$.

\end{appendices}

\bibliography{LC}

\end{document}